\documentclass{article}
\usepackage{amsmath, amsfonts, amsthm, amssymb, color}
\usepackage{graphicx}
\usepackage{float}
\usepackage{verbatim}

\hoffset=-2cm\voffset=-3.0cm
\setlength{\textwidth}{16cm}
\setlength{\textheight}{24cm}

\setcounter{secnumdepth}{3}
\numberwithin{equation}{section}

\sloppy

\newtheorem{thm}{Theorem}[section]
\newtheorem{lma}[thm]{Lemma}
\newtheorem{cor}[thm]{Corollary}

\newtheorem{prop}[thm]{Proposition}
\newtheorem{conj}[thm]{Conjecture}

\newtheorem{defi}[thm]{Definition}
\newtheorem{theorem}{Theorem}[section]
\newtheorem{lemma}[thm]{Lemma}
\theoremstyle{definition}
\newtheorem{remark}[thm]{Remark}

\renewcommand{\ge}{\geqslant}
\renewcommand{\le}{\leqslant}
\renewcommand{\geq}{\geqslant}
\renewcommand{\leq}{\leqslant}
\renewcommand{\H}{\text{H}}

\def\R{\mathbb{R}}

\def\iI{{\mathcal{I}}}
\def\Nn{N_{2^{-n}}}

\def\Nnk{N_{2^{-(n-k)}}}
\newcommand{\norm}[1]{{\left\lVert #1 \right\rVert}}

\newcommand{\eps}{\varepsilon}
\newcommand{\I}{I}
\newcommand{\J}{J}
\newcommand{\beq}{\begin{equation}}
\newcommand{\eeq}{\end{equation}}
\newcommand{\dimfc}{\overline{\dim}_B\,F_C}
\newcommand{\erase}[1]{}

\title{Inhomogeneous self-similar sets with overlaps}

\author{ Simon Baker$^1$, Jonathan M. Fraser$^2$ and Andr\'as M\'ath\'e$^3$ \\ \\
\emph{$^1$Department of Mathematics and Statistics,} \\ \emph{University of Reading,} \\ \emph{Reading,  RG6 6AX, UK} \\ Email: simonbaker412@gmail.com\\ \\
\emph{$^2$School of Mathematics,} \\ \emph{The University of Manchester,} \\ \emph{Manchester, M13 9PL, UK}\\ Email: jonathan.fraser@manchester.ac.uk\\ \\
\emph{$^3$Mathematics Institute,} \\ \emph{University of Warwick,} \\ \emph{Coventry, CV4 7AL, UK}\\ Email: A.Mathe@warwick.ac.uk\\ \\
 }

\begin{document}
\maketitle

\begin{abstract}
It is known that if the underlying iterated function system satisfies the open set condition, then the upper box dimension of an inhomogeneous self-similar set is the maximum of the upper box dimensions of the homogeneous counterpart and the condensation set.  First, we prove that this `expected formula' does not hold in general if there are overlaps in the construction.  We demonstrate this via two different types of counterexample: the first is a family of overlapping inhomogeneous self-similar sets based upon Bernoulli convolutions; and the second applies in higher dimensions and makes use of a spectral gap property that holds for certain subgroups of $SO(d)$ for $d\geq 3$.

We also obtain new upper bounds for the upper box dimension of an inhomogeneous self-similar set which hold in general. Moreover, our counterexamples demonstrate that these bounds are optimal. In the final section we show that if the \emph{weak separation property} is satisfied, ie. the overlaps are controllable, then the `expected formula' does hold. \\

\emph{Mathematics Subject Classification} 2010: 28A80, 37C45.

\emph{Key words and phrases}: Inhomogeneous self-similar set, box dimension, overlaps, Bernoulli convolution, Garsia number, weak separation property.
\end{abstract}

\section{Inhomogeneous iterated function systems}

Inhomogeneous iterated function systems, introduced by Barnsley and Demko \cite{barndemko}, are natural generalisations of classical iterated function systems and consist of a classical (homogeneous) iterated function system (IFS) and a compact \emph{condensation set}.  Let $\{ S_i \}_{i \in \mathcal{I}}$ be a classical IFS, i.e. a finite collection of contracting self-maps on a compact subset of Euclidean space $X$ and let $C\subseteq X$ be the compact condensation set.  An elegant application of Banach's contraction mapping theorem yields that there is a unique non-empty compact set $F_C$ satisfying
\[
F_C =  \bigcup_{i \in \mathcal{I}} S_i (F_C) \ \cup \ C,
\]
which we refer to as the \emph{inhomogeneous attractor} of the inhomogeneous IFS.  Note that the attractors of classical (or homogeneous) IFSs are inhomogeneous attractors with condensation equal to the empty set.  It turns out that $F_C$ is equal to the union of $F_\emptyset$ and all images of $C$ by compositions of maps from the defining IFS.  This means that for countably stable dimensions, like the Hausdorff dimension $\dim_\H$, one immediately obtains
\[
\dim_\H F_C = \max \{ \dim_\H  F_\emptyset, \ \dim_\H C\},
\]
but establishing similar formulae for dimensions that are not countably stable is more challenging. As such it is natural to study the upper and lower box dimensions, $\overline{\dim}_\text{B}$ and $\underline{\dim}_\text{B}$. In what follows we will focus on inhomogeneous self-similar sets, i.e. inhomogeneous attractors where the defining contractions are similarities.  For this class of attractors it was shown in \cite{olseninhom} that if the defining system satisfies an `inhomogeneous strong separation condition', then the `expected formula' also holds for upper box dimension, i.e.
\begin{equation} \label{expected}
\overline{\dim}_\text{B} F_C = \max \{ \overline{\dim}_\text{B}  F_\emptyset, \ \overline{\dim}_\text{B} C\}.
\end{equation}
It was shown in \cite{fraser1} that the analogous formula fails for lower box dimension, even if one has good separation properties, and that $(\ref{expected})$ remains valid even if the separation condition from \cite{olseninhom} is relaxed to the open set condition (OSC), which, in particular, does not depend on $C$.  See \cite[Chapter 9]{falconer} for the definition of the OSC.  In \cite{fraser2} the problem was addressed for inhomogeneous self-\emph{affine} sets and in this context (\ref{expected}) does not generally hold for upper box dimension even if the OSC is satisfied.

In this paper we focus on the overlapping situation (i.e. without assuming the OSC) and prove that (\ref{expected}) does not hold in general by considering a construction based on number theoretic properties of certain Bernoulli convolutions (Section~\ref{resultssection}).  In Section~\ref{example},
relying on a specific spectral gap property of $SO(d)$ for $d\geq 3$,
we provide another construction in $\R^d$  where $\underline{\dim}_\text{B}\, F_C=d-1-\eps$ with $F_\emptyset$ and $C$ being singletons.

In Section~\ref{upperbounds} we show that these constructions (in a certain sense) give the highest possible value of $\overline{\dim}_\text{B} F_C$. In particular, we provide new general upper bounds for $\overline{\dim}_\text{B} F_C$ in terms of $\overline{\dim}_\text{B} F_\emptyset$ and $\overline{\dim}_\text{B}\,C$.

Finally, in Section~\ref{WSPsection},
we show that (\ref{expected}) does hold if the \emph{weak separation property} is satisfied, and make connections with the recent pioneering work of Hochman concerning homogeneous self-similar sets with overlaps \cite{hochman, hochman2}.

\subsection{Box dimensions and structure of inhomogeneous self-similar sets}

In this section we introduce some notation and state the main result of \cite{fraser1} to put the rest of the paper in context.  For $\delta>0$ and bounded $E \subseteq \mathbb{R}^n$, let $N_\delta(E)$ denote the number of half open cubes which intersect $E$ from the $\delta$-mesh imposed on $\mathbb{R}^n$ oriented with the co-ordinate axes.  Here a half open cube is taken to mean a product of half open intervals $[a,b)$.  The upper and lower box dimensions of $E$ can be defined in terms of $N_\delta(E)$ by
\[
\overline{\dim}_\text{B}E  \ = \ \limsup_{\delta \to 0} \frac{\log N_\delta(E)}{-\log \delta}
\]
and
\[
\underline{\dim}_\text{B}E \ = \ \liminf_{\delta \to 0} \frac{\log N_\delta(E)}{-\log \delta}
\]
respectively.  If the upper and lower box dimensions are equal, then we denote the common value by $\dim_\text{B}E$ and call it the box dimension.  For each similarity map $S_i$ in our defining IFS, let $c_i \in (0,1)$ be the contraction constant, i.e. the value such that
\[
| S_i(x) - S_i(y) | = c_i | x-y |
\]
for all $x,y \in X$. The \emph{similarity dimension} is the unique solution $s\geq0$ of the Hutchinson-Moran formula
\[
\sum_{i \in \mathcal{I}} c_i^s = 1.
\]
It is well-known that the Hausdorff and box dimensions of the (homogeneous) self-similar set $F_\emptyset \subset \mathbb{R}^n$ are equal and bounded above by $\min\{s, n\}$ with equality occurring  if the open set condition is satisfied, \cite[Chapter 9]{falconer}.  The main result of \cite{fraser1} was that for an inhomogeneous self-similar set we always have
\begin{equation} \label{bounds}
\max \{ \overline{\dim}_\text{B}  F_\emptyset, \ \overline{\dim}_\text{B} C\} \ \leq \ \overline{\dim}_\text{B} F_C \ \leq \ \max \{ s, \ \overline{\dim}_\text{B} C\},
\end{equation}
which proves (\ref{expected}) in many situations, most notably when the open set condition is satisfied.

Let $\mathcal{I}^* = \bigcup_{k\geq1} \mathcal{I}^k$ denote the set of all finite sequences with entries in $\mathcal{I}$ and for $\I= \big(i_1, i_2, \dots, i_k \big) \in \mathcal{I}^*$ write
\[
S_{\I} = S_{i_1} \circ S_{i_2} \circ \dots \circ S_{i_k}
\]
and
\[
c_{\I} = c_{i_1} c_{i_2}  \dots  c_{i_k}
\]
which is the contraction ratio of $S_\I$. The \emph{orbital set} is defined by
\[
\mathcal{O} \ = \ C \ \cup \  \bigcup_{\I \in \mathcal{I}^*} S_{\I} (C)
\]
and it is easy to see that $F_C \ = \  F_\emptyset \cup \mathcal{O} \ = \  \overline{\mathcal{O}}$ (cf. \cite[Lemma 3.9]{ninaphd}).

\section{Inhomogeneous Bernoulli convolutions and failure of (\ref{expected})} \label{resultssection}

We begin by computing the box dimensions of a family of overlapping inhomogeneous self-similar sets based on Bernoulli convolutions.  Fix $\lambda \in (0,1)$, let $X = [0,1]^2$ and let $S_0, S_1 : X \to X$ be defined by
\[
S_0(x) = \lambda x \ \ \text{ and } \ \ S_1(x) = \lambda x + (1-\lambda,0).
\]
To the homogeneous IFS $\{S_0, S_1\}$ associate the condensation set
\[
C = \{0\}\times [0,1]
\]
and observe that $F_\emptyset = [0,1] \times \{0\}$ and so $\dim_\text{B} F_\emptyset = \dim_\text{B} C = 1$.  We will denote the inhomogeneous attractor of this system by $F_C^\lambda$ to emphasise the dependence on $\lambda$. In this section we construct several counterexamples to (\ref{expected}). Our first counterexample makes use of a well known class of algebraic integers known as \emph{Garsia numbers}. We define a Garsia number to be a positive real algebraic integer with norm $\pm 2$, whose conjugates are all of modulus strictly greater than $1.$ Examples of Garsia numbers include $\sqrt[n]{2}$ and $1.76929\ldots$, the appropriate root of $x^{3}-2x-2=0.$ In \cite{Gar} Garsia showed that whenever $\lambda$ is the reciprocal of a Garsia number, then the associated Bernoulli convolution is absolutely continuous with bounded density.

\begin{figure}[H] \label{examples}
	\centering
	\includegraphics[width=160mm]{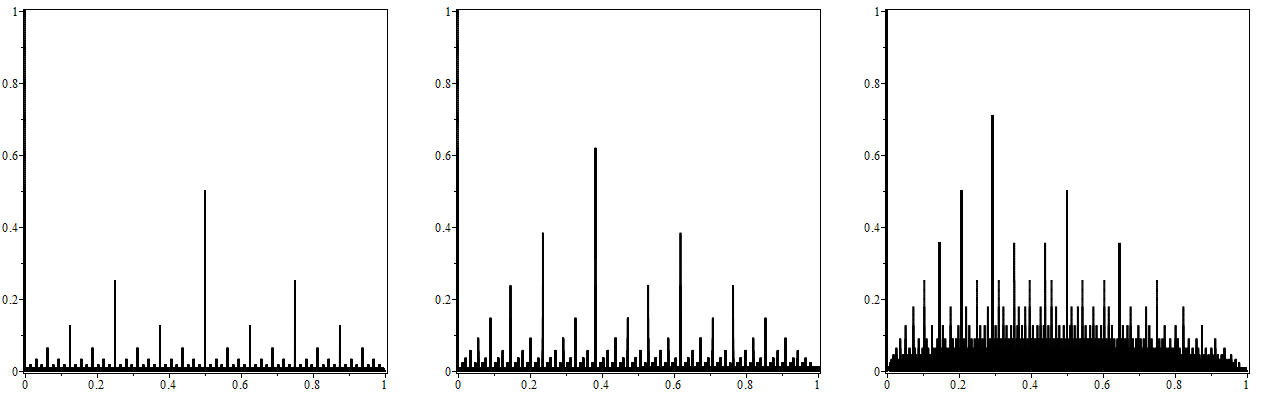}
\caption{Three plots of $F_C^\lambda$, where $\lambda$ is chosen to be 1/2 (where there are no overlaps), the reciprocal of the golden mean (which is Pisot), and the reciprocal of $\sqrt{2}$ (which is Garsia). }
\end{figure}

\begin{thm} \label{thmbernoulli}
If $\lambda\in(1/2,1)$ is the reciprocal of a Garsia number, then
\[
\dim_\text{\emph{B}} F_C^\lambda \ = \  \frac{\log( 4\lambda)}{\log2}  \ > \ 1.
\]
\end{thm}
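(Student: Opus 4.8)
The plan is to exploit the explicit product structure of the orbital set. Writing $\mathcal{I}=\{0,1\}$, every word $\I=(i_1,\dots,i_\ell)\in\mathcal{I}^*$ gives $S_\I(C)=\{(1-\lambda)\,b_\I\}\times[0,\lambda^\ell]$, a vertical segment of height $\lambda^\ell$ sitting over the point $(1-\lambda)b_\I$, where $b_\I=\sum_{j=1}^\ell i_j\lambda^{j-1}$. Thus, setting $B_\ell=\{\sum_{j=1}^\ell a_j\lambda^{j-1}:a_j\in\{0,1\}\}$, the level-$\ell$ pieces of $\mathcal{O}$ are $|B_\ell|$ vertical segments of common height $\lambda^\ell$, positioned over $(1-\lambda)B_\ell$, and $F_C^\lambda=\overline{\mathcal O}=F_\emptyset\cup\mathcal O$. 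Since $\lambda>1/2$ the maps overlap, so the whole difficulty is understanding how many distinct, well-separated columns the points of $B_\ell$ occupy. Note $\tfrac{\log(4\lambda)}{\log2}=2+\tfrac{\log\lambda}{\log 2}\in(1,2)$ for $\lambda\in(1/2,1)$, so the target value automatically exceeds $\max\{\overline{\dim}_\text{B}F_\emptyset,\overline{\dim}_\text{B}C\}=1$ and the theorem will indeed violate (\ref{expected}).

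The key tool, and the step where the Garsia hypothesis enters, is a separation estimate: writing $\beta=1/\lambda$, I would show there is a constant $c>0$ with $|\sum_{j=1}^\ell\eta_j\lambda^{j-1}|\ge c\,2^{-\ell}$ for every nonzero $\eta\in\{-1,0,1\}^\ell$. To see this, factor $\sum_j\eta_j\lambda^{j-1}=\lambda^{\ell-1}Q(\beta)$ where $Q(x)=\sum_{j}\eta_j x^{\ell-j}$ is a nonzero integer polynomial with coefficients in $\{-1,0,1\}$. Because $\beta$ has norm $\pm2$, its minimal polynomial has constant term of modulus $2$, which by Gauss's lemma cannot divide a polynomial with $\{-1,0,1\}$ coefficients unless that polynomial vanishes; hence $Q(\beta)\ne0$ and its field norm $N(Q(\beta))=\prod_i Q(\beta_i)$ is a nonzero rational integer, so $|N(Q(\beta))|\ge1$. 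The conjugates $\beta_i$ of a Garsia number all satisfy $|\beta_i|>1$, so $|Q(\beta_i)|\le C_i|\beta_i|^\ell$, and since $\prod_i|\beta_i|=|N(\beta)|=2$ we get $\prod_{i\ne1}|Q(\beta_i)|\le C(2\lambda)^\ell$. Combining, $|Q(\beta)|\ge\big(C(2\lambda)^\ell\big)^{-1}$, which gives the claimed bound after multiplying by $\lambda^{\ell-1}$. In particular all $2^\ell$ sums in $B_\ell$ are distinct (so $|B_\ell|=2^\ell$) and pairwise at least $c2^{-\ell}$ apart.

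With this in hand the box counting is a matching of scales. For the upper bound I would fix $\delta=2^{-k}$ and cover each level: the level-$\ell$ segments meet at most $|B_\ell|\le 2^\ell$ columns, and within each the vertical extent $[0,\lambda^\ell]$ costs $\le\lambda^\ell 2^{k}+1$ boxes, so level $\ell$ contributes $\lesssim(2\lambda)^\ell 2^{k}$ boxes for $\ell\le k$. Segments with $\lambda^\ell<\delta$ all lie in the bottom strip $[0,1]\times[0,\delta)$ and, together with $F_\emptyset$, cost only $O(2^k)$ boxes in total, while the remaining finitely many intermediate levels are controlled by the same geometric estimate. Since $2\lambda>1$ the series is dominated by its top term, giving $N_\delta(F_C^\lambda)\lesssim(4\lambda)^k=\delta^{-\log(4\lambda)/\log2}$ and hence $\overline{\dim}_\text{B}F_C^\lambda\le\log(4\lambda)/\log2$; I note this half uses only $|B_\ell|\le2^\ell$ and so holds for every $\lambda$.

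For the lower bound, again take $\delta=2^{-k}$ and look only at level $k$. By the separation estimate the $2^{k}$ distinct points of $(1-\lambda)B_k$ are $\gtrsim 2^{-k}$ apart, so they occupy $\gtrsim 2^{k}$ distinct $\delta$-columns; over each such column $\mathcal O$ contains a full segment of height $\lambda^{k}\ge\delta$, requiring $\ge\lambda^{k}2^{k}=(2\lambda)^{k}$ boxes. Therefore $N_\delta(F_C^\lambda)\gtrsim 2^{k}(2\lambda)^{k}=(4\lambda)^{k}=\delta^{-\log(4\lambda)/\log2}$, giving $\underline{\dim}_\text{B}F_C^\lambda\ge\log(4\lambda)/\log2$. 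Combined with the upper bound this yields equality, and $4\lambda>2$ makes the value strictly larger than $1$. I expect the genuine obstacle to be the separation estimate: the counting collapses to a one-line geometric-series computation once one knows that the level-$\ell$ translations neither coincide nor cluster, and it is precisely the arithmetic of Garsia numbers (norm $\pm2$ with conjugates outside the unit circle) that delivers both the exact count $|B_\ell|=2^\ell$ and the quantitative $2^{-\ell}$-spacing; without it the overlaps could in principle reduce the number of occupied columns all the way down to $\lambda^{-\ell}$, which would only give dimension $1$.
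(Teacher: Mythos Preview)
Your argument is correct and follows essentially the same route as the paper's: represent the orbital set as vertical segments of height $\lambda^\ell$ over the level-$\ell$ Bernoulli sums, use Garsia's separation lemma (which you reprove, whereas the paper simply cites it) to show these sums occupy $\asymp\min\{2^\ell,\delta^{-1}\}$ distinct $\delta$-columns, and reduce $N_\delta$ to a geometric series dominated by its $\ell\approx k$ term; the paper organises the count by horizontal strips rather than by level, but this is the same computation read in the other order. One small point to tighten in your upper bound: the ``finitely many intermediate levels'' between $\ell=k$ and $\ell=\lfloor k\log 2/\log(1/\lambda)\rfloor$ actually number $O(k)$, not $O(1)$, but once you cap the column count at $\delta^{-1}=2^k$ their total contribution is $\lesssim 2^{2k}\sum_{\ell\ge k}\lambda^\ell\asymp(4\lambda)^k$, so the conclusion is unaffected.
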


We defer the proof of Theorem \ref{thmbernoulli} to Section \ref{thmbernoulliproof} below.  For every $\lambda\in(1/2,1)$ which is the reciprocal of a Garsia number, the set $F_C^\lambda$ provides a counterexample to (\ref{expected}) for the upper (and lower) box dimension, but it is also worth noting that this example is `sharp' in that, given the data: $\overline{\dim}_\text{B} F_\emptyset = 1$, $\overline{\dim}_\text{B} C = 1$ and $s = -\log 2/\log \lambda$, we prove that this is as large as $\overline{\dim}_\text{B} F_C$ can be.  For more details, see Corollary \ref{cor2} and Remark \ref{remarksharp}.

Our second source of counterexamples to (\ref{expected}) is a much larger set. As the following statement shows, $F_C^\lambda$ typically provides a counterexample to (\ref{expected}) whenever $\lambda$ lies in a certain subinterval of $(1/2,1)$.

\begin{thm}\label{thmgenericbernoulli}
For Lebesgue almost every $\lambda\in(1/2,0.668)$ we have
\[
\dim_\text{\emph{B}} F_C^\lambda \ = \  \frac{\log( 4\lambda)}{\log2}  .
\]
\end{thm}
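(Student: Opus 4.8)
The plan is to reduce the theorem to a single \emph{separation estimate} for the horizontal positions of the iterated images of $C$, and then to establish that estimate for almost every $\lambda$ by the transversality method together with Borel--Cantelli. First I would record the geometry of the orbital set. For $\I=(i_1,\dots,i_k)\in\iI^*$ the map $S_\I$ is the homothety $x\mapsto\lambda^kx+(x_\I,0)$ with $x_\I=(1-\lambda)\sum_{j:\,i_j=1}\lambda^{j-1}$, so each $S_\I(C)$ is the vertical segment $\{x_\I\}\times[0,\lambda^k]$. Since $F_C^\lambda=F_\emptyset\cup\mathcal O$ and every such segment is based on $\{y=0\}$, the number of $\delta$-mesh cells meeting $\mathcal O$ is \emph{exactly} $\sum_j\lceil h_j/\delta\rceil$, the sum running over nonempty columns of the mesh, where $h_j=\lambda^{k_{\min}(j)}$ is the height of the tallest (equivalently, lowest level) segment in column $j$. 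After a layer-cake rearrangement this expresses $N_\delta(\mathcal O)$ in terms of the quantities $N_\delta(\{x_\I:|\I|\le m\})$; feeding in the trivial bounds $N_\delta(\{x_\I:|\I|\le m\})\le\min\{2^m,\delta^{-1}\}$ gives $\overline{\dim}_\text{B}F_C^\lambda\le\log(4\lambda)/\log2$ for \emph{every} $\lambda\in(1/2,1)$, in agreement with the general bounds of Section~\ref{upperbounds}. Thus only the matching lower bound for $\underline{\dim}_\text{B}$ requires the arithmetic of $\lambda$.

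Write $\theta=\log(1/\lambda)/\log2\in(0,1)$ and fix $\eps>0$. Given a large $n$, set $\delta=\lambda^n$ and $m=\lfloor\theta n\rfloor$, so that $\lambda^n\le2^{-m}$ and $2^m\asymp\lambda^{-n}$. Keeping only the columns that contain a point of $\{x_\I:|\I|=m\}$, each of which carries a segment of height at least $\lambda^m$, the exact counting formula gives
\[
N_{\lambda^n}(\mathcal O)\ \ges\ \lambda^{m-n}\,N_{\lambda^n}\big(\{x_\I:|\I|=m\}\big)\ \ges\ \lambda^{m-n}\,N_{2^{-m}}\big(\{x_\I:|\I|=m\}\big),
\]
the last step using $\lambda^n\le2^{-m}$. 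A direct computation with $\lambda^n\asymp2^{-m}$ then shows that $\underline{\dim}_\text{B}F_C^\lambda\ge\log(4\lambda)/\log2-\eps$ follows once we know the separation estimate
\[
N_{2^{-m}}\big(\{x_\I:|\I|=m\}\big)\ \ges\ 2^{(1-\eps)m},
\]
call it $(\star)$, for almost every $\lambda$ and all large $m$. As $n$ runs through the large integers, $m=\lfloor\theta n\rfloor$ attains every large integer, so $(\star)$ for all large $m$ indeed controls the $\liminf$ defining $\underline{\dim}_\text{B}$.

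To prove $(\star)$ I would run an $L^2$/transversality argument. Let $\mathcal C_m$ count the ordered pairs $(\I,\I')$ with $|\I|=|\I'|=m$ and $\abs{x_\I-x_{\I'}}\le2^{-m}$. Cauchy--Schwarz applied to the occupation numbers of the $2^{-m}$-cells gives $N_{2^{-m}}(\{x_\I:|\I|=m\})\ge 4^m/\mathcal C_m$, so $(\star)$ reduces to the collision bound $\mathcal C_m\les2^{(1+\eps)m}$. The $2^m$ diagonal pairs contribute exactly $2^m$; an off-diagonal pair corresponds to a nonzero polynomial $g(\lambda)=\sum_{j=1}^m a_j\lambda^{j-1}$ with $a_j=\epsilon_j-\epsilon'_j\in\{-1,0,1\}$, and the collision condition reads $\abs{g(\lambda)}\le2^{-m}/(1-\lambda)$. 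Grouping these pairs by the index $p$ of the lowest nonzero coefficient (there are $2^{2m-p-1}$ of them) and factoring $g=\lambda^p\tilde g$ with $\tilde g(0)=\pm1$, the transversality property of $\{-1,0,1\}$-power series on $(1/2,0.668)$, namely $\mathrm{Leb}\{\lambda:\abs{\tilde g(\lambda)}\le r\}\le Cr$, gives $\mathrm{Leb}\{\lambda:\abs{g(\lambda)}\le2^{-m}/(1-\lambda)\}\les 2^{p-m}$. Summing over $p$ yields $\int\mathcal C_m\,d\lambda=O(m\,2^m)$, whence $\mathrm{Leb}\{\lambda:\mathcal C_m>2^{(1+\eps)m}\}=O(m\,2^{-\eps m})$ by Markov's inequality. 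This is summable in $m$, so Borel--Cantelli delivers $\mathcal C_m\les2^{(1+\eps)m}$ for almost every $\lambda$ and all large $m$, which is $(\star)$.

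The heart of the matter, and the step I expect to be the true obstacle, is the transversality estimate for $\{-1,0,1\}$-power series: it is precisely this that forces the range $(1/2,0.668)$, since the elementary counting and the Borel--Cantelli bookkeeping are robust but consume a transversality constant only available below roughly $0.668$. (For the reciprocals of Garsia numbers of Theorem~\ref{thmbernoulli} no averaged statement is needed: Garsia's separation lemma shows that distinct level-$m$ positions are $\ges2^{-m}$ apart, so $(\star)$ holds with $\eps=0$ for those specific $\lambda$, which is why that result is exact on all of $(1/2,1)$.) A minor point to verify is that $N_\delta$ varies by at most a bounded factor as $\delta$ ranges over $[\lambda^{n},\lambda^{n-1}]$, so that controlling the geometric scales $\delta=\lambda^n$ suffices for the genuine $\liminf$; this is routine.
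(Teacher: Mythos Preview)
Your proposal is correct and follows essentially the same strategy as the paper: the upper bound holds for every $\lambda$ by the geometric counting, and for the lower bound one reduces to a separation estimate for the level-$m$ positions, establishes it almost everywhere by integrating a collision count over $\lambda$ using the $\{-1,0,1\}$-transversality on $(1/2,0.668)$, and finishes with Markov plus Borel--Cantelli.

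The packaging differs in two minor respects. First, the paper does not re-derive the transversality bound but quotes the Benjamini--Solomyak $L^1$ estimate $\int_{\mathcal J}\#R_2(s,\lambda,n)/2^n\le C_1 s$ as a black box; your direct derivation (grouping by the lowest nonzero coefficient) reproduces essentially the same bound and makes the argument self-contained. Second, to pass from the collision count to the covering number, the paper uses a first-moment argument on the set $T(s,\lambda,n)$ of \emph{non-isolated} points (showing that for a.e.\ $\lambda$ at least $2^{n-1}$ points of $A_n(\lambda)$ are $1/(n^22^n)$-separated), whereas you use the second-moment route $N_{2^{-m}}\ge 4^m/\mathcal C_m$ via Cauchy--Schwarz. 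Both are standard and deliver the same separation exponent $1-\eps$; the paper's version gives a slightly more explicit separation scale $1/(n^22^n)$, while yours is cleaner to state and avoids introducing the auxiliary set $T$.
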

The appearance of the quantity $0.668$ is a consequence of transversality arguments used in \cite{BenSol}. Our proof of Theorem \ref{thmgenericbernoulli} will rely on counting estimates appearing in this paper and will be given in Section \ref{thmgenericbernoulliproof}.  We note that the value $\log(4\lambda)/\log2$ also appears as the dimension of a related family of sets.  In particular, for $\lambda\in(1/2,1)$, let $A^\lambda$ be the (homogeneous) self-\emph{affine} set associated to the IFS consisting of affine maps $T_0,T_1: X \to X$ defined by
\[
T_0(x,y) = (\lambda x,y/2) \ \ \text{ and } \ \ T_1(x) = (\lambda x + 1-\lambda, y/2+1/2).
\]
It follows from standard dimension formulae for self-affine sets that the box dimension of $A^\lambda$ is given by $\log(4\lambda)/\log2$ for \emph{every} $\lambda\in(1/2,1)$, see for example \cite[Corollary 2.7]{fraser0}.  Also, for \emph{every} $\lambda \in (0,1/2]$, we have $\dim_\text{B} F_C^\lambda = \dim_\text{B} A^\lambda = 1$, but this case is not so interesting because the IFS defining $F_C^\lambda$ does not have overlaps.  The relevance of this comparison is purely aesthetic, noting that the projection of this IFS onto the first coordinate gives the Bernoulli convolution and onto the second coordinate gives a simple IFS of similarities yielding $C$ as the attractor.

\begin{figure}[H] \label{affine}
	\centering
	\includegraphics[width=160mm]{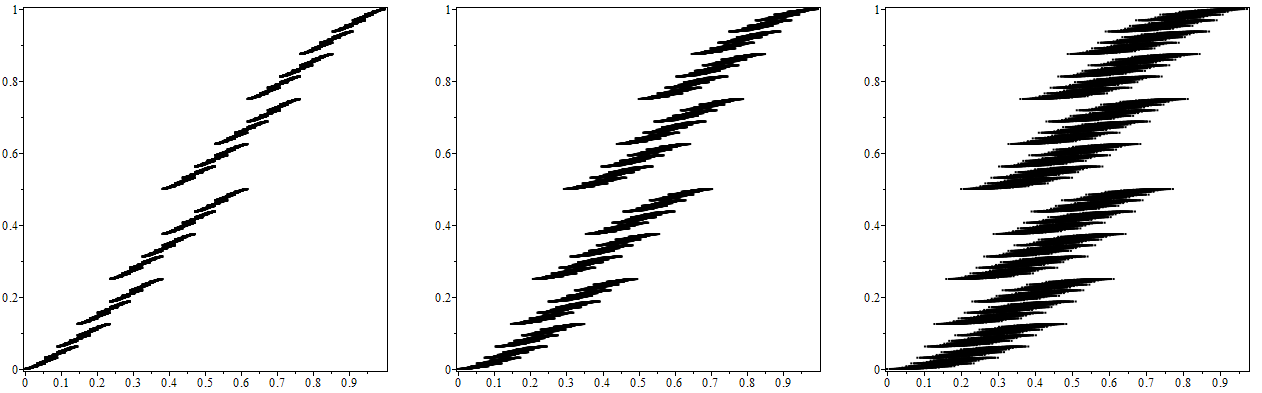}
\caption{Three plots of $A^\lambda$, where $\lambda$ is chosen to be the reciprocal of the golden mean, the reciprocal of $\sqrt{2}$, and $4/5$.}
\end{figure}

The key reason that the sets $F_C^\lambda$ provide counterexamples to  (\ref{expected}) is that the set $F_\emptyset$ is trapped in a proper subspace.  The underlying IFS has potential to give rise to an attractor with dimension bigger than 1, but cannot because the attractor must lie in a 1 dimensional line.  However, since the condensation set does not lie in this subspace, it ``releases'' some of this potential dimension.  At first sight, one might expect this to be the only way to violate (\ref{expected}), so we briefly point out that this is not the case.  In particular, to the system considered in this section, add the similarity map $S_2: x \mapsto \varepsilon x +(0,1-\varepsilon)$ where $\varepsilon$ is chosen in $(0,1-\lambda)$.  Let $E_C^\lambda$ and $E_\emptyset^\lambda$  denote the corresponding inhomogeneous and homogeneous attractors of this system, respectively.  Note that $F_C^\lambda \subset E_C^\lambda$ and so
\[
\overline{\dim}_\text{B} E_C^\lambda \ \geq \  \frac{\log( 4\lambda)}{\log2},
\]
whenever $\lambda$ is the reciprocal of a Garsia number by Theorem \ref{thmbernoulli}.  Moreover, $E_\emptyset^\lambda \subset \pi_1(E_\emptyset^\lambda) \times \pi_2(E_\emptyset^\lambda)$, where $\pi_1$ and $\pi_2$ denote projection onto the first and second coordinates respectively. Both projections are easy to understand since $\pi_1(E_\emptyset^\lambda) = [0,1]$ and $\pi_2(E_\emptyset^\lambda) \subset [0,1]$ is the homogeneous self-similar attractor of $\{ x \mapsto \lambda x, \, x \mapsto \varepsilon x+ (1-\varepsilon)\}$ and so has upper box dimension bounded above by the solution $s = s(\varepsilon,\lambda)>0$ of $\lambda^s+\varepsilon^s=1$.  Using standard properties of the box dimension of products \cite[Chapter 7]{falconer}, this guarantees $\overline{\dim}_\text{B}  E^\lambda_\emptyset \leq 1 +s(\varepsilon,\lambda)$.  By choosing $\varepsilon$ sufficiently small (after fixing $\lambda$), $ s(\varepsilon,\lambda)$ can be made arbitrarily small, in particular to guarantee
\[
\max \{ \overline{\dim}_\text{B}  E^\lambda_\emptyset, \ \overline{\dim}_\text{B} C\} \ \leq \  1+s(\varepsilon,\lambda) \ < \ \frac{\log( 4\lambda)}{\log2} \ \leq \ \overline{\dim}_\text{B} E_C^\lambda
\]
and so (\ref{expected}) fails despite the fact that $E_C^\lambda$ is not contained in a subspace.  For more discussion on possible mechanisms for violating (\ref{expected}), see Section \ref{WSPsection}.

\subsection{Notational remark}

For real-valued functions $A$ and $B$, we will write $A(x) \lesssim B(x)$ if there exists a constant $c>0$ independent of the variable $x$ such that $A(x) \leq c B(x)$, $A(x) \gtrsim B(x)$ if there exists a constant $c'>0$  independent of the variable $x$ such that $A(x) \geq c' B(x)$ and $A(x) \asymp B(x)$ if $A(x) \lesssim B(x)$ and $A(x) \gtrsim B(x)$.  In our setting, $x$ is normally some $\delta>0$ from the definition of box dimension or some $k \in \mathbb{N}$ and the comparison constant $c,c'$ can depend on fixed quantities only, like $\lambda$ and the defining parameters in the IFS.

\subsection{Proof of Theorem \ref{thmbernoulli}} \label{thmbernoulliproof}

Before we get to the proof we state a useful separation property that holds for the reciprocals of Garsia numbers and demonstrate the relevance to our situation via (\ref{S(C) equation}) below.

\begin{lma}[Garsia \cite{Gar}]
\label{Garsia's lemma}
Let $\lambda\in(1/2,1)$ be the reciprocal of a Garsia number and $(i_{k})_{k=1}^{n},(i_{k}')_{k=1}^{n}\in\{0,1\}^{n}$ be distinct words of length $n$. Then $$\Big|(1-\lambda)\sum_{k=1}^{n}i_{k}\lambda^{k-1}-(1-\lambda)\sum_{k=1}^{n}i_{k}'\lambda^{k-1}\Big|> \frac{K}{2^{n}}.$$ For some strictly positive constant $K$ that only depends on $\lambda.$
\end{lma}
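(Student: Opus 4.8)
The plan is to translate the estimate into a statement about an integer polynomial evaluated at the Garsia number and its conjugates, and then to play the lower bound coming from integrality off against an upper bound coming from the size of the conjugates. Write $\beta = 1/\lambda \in (1,2)$ for the Garsia number itself and set $\epsilon_k = i_k - i_k' \in \{-1,0,1\}$; since the two words differ, not all $\epsilon_k$ vanish. As $1-\lambda$ is a fixed positive constant, it suffices to bound $\big|\sum_{k=1}^n \epsilon_k \lambda^{k-1}\big|$ from below by a constant multiple of $2^{-n}$. Multiplying through by $\beta^{n-1}$ and using $\lambda = \beta^{-1}$, I would reduce this to studying
\[
P(\beta) \ = \ \sum_{k=1}^n \epsilon_k \beta^{n-k},
\]
the value at $\beta$ of the polynomial $P \in \mathbb{Z}[x]$ with coefficients in $\{-1,0,1\}$, which is not the zero polynomial.

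Let $m$ be the monic integer minimal polynomial of $\beta$, of degree $d$, with roots $\beta = \beta_1, \beta_2, \dots, \beta_d$; by hypothesis $|\beta_i| > 1$ for every $i$, and $\prod_{i=1}^d |\beta_i| = |m(0)|$ equals the modulus of the norm, namely $2$. The engine of the proof is the product $\prod_{i=1}^d P(\beta_i)$, which is a symmetric integer polynomial in the $\beta_i$ and hence an integer (it is $\mathrm{Res}(m,P)$). Granting for the moment that $P(\beta) \neq 0$, all conjugate values $P(\beta_i)$ are nonzero, so this integer is nonzero and therefore $\prod_{i=1}^d |P(\beta_i)| \geq 1$. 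For the conjugate factors with $i \geq 2$ I would use $|\beta_i| > 1$ together with the triangle inequality and a geometric series to get $|P(\beta_i)| \leq \sum_{j=0}^{n-1}|\beta_i|^j \leq (|\beta_i|-1)^{-1}|\beta_i|^n$, so that, writing $C = \prod_{i=2}^d (|\beta_i|-1)^{-1}$ and using $\prod_{i=2}^d|\beta_i| = 2/\beta$,
\[
\prod_{i=2}^d |P(\beta_i)| \ \leq \ C\,(2/\beta)^n .
\]
Combining this with $\prod_{i=1}^d|P(\beta_i)|\geq 1$ yields $|P(\beta)| \geq C^{-1}(\beta/2)^n$, and unwinding the substitution gives $\big|\sum_{k=1}^n \epsilon_k\lambda^{k-1}\big| = \beta^{-(n-1)}|P(\beta)| \geq C^{-1}\beta\, 2^{-n}$. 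Multiplying by $1-\lambda$ and shrinking the constant slightly then produces the strict bound with a $K$ depending only on $\lambda$.

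The step I expect to be the crux — and the only place the precise norm hypothesis $\pm 2$ is needed for nonvanishing — is showing $P(\beta) \neq 0$, equivalently that no nontrivial $\{-1,0,1\}$-combination $\sum_k \epsilon_k \lambda^{k-1}$ can vanish. Here I would factor out the largest power of $x$ dividing $P$, writing $P(x) = x^r \tilde P(x)$ with $\tilde P(0) \in \{-1,1\}$; since $\beta \neq 0$, the equation $P(\beta)=0$ would force $\tilde P(\beta) = 0$. But then the minimal polynomial $m$ divides $\tilde P$ in $\mathbb{Z}[x]$ (Gauss's lemma, $m$ monic), so the integer $|m(0)| = 2$ would divide $|\tilde P(0)| = 1$, a contradiction. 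This is exactly the point at which the definition of a Garsia number is indispensable: the conjugates lying strictly outside the unit circle give the upper bound on each $|P(\beta_i)|$, while the norm being exactly $\pm 2$ both pins down $\prod_{i\geq 2}|\beta_i| = 2/\beta$ and, through the constant-term divisibility, rules out cancellation.
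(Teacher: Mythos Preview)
Your argument is correct and is essentially the classical proof of Garsia's separation lemma: reduce to a $\{-1,0,1\}$-polynomial $P$ evaluated at the Garsia number $\beta$, use integrality of $\prod_i P(\beta_i)$ for the lower bound, bound the conjugate factors via the geometric series and $\prod_{i\ge 2}|\beta_i|=2/\beta$, and rule out $P(\beta)=0$ by the constant-term divisibility obstruction coming from $|m(0)|=2$. All steps are sound, including the nonvanishing step via Gauss's lemma.

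As for comparison with the paper: there is nothing to compare against. The paper does not prove this lemma; it attributes it to Garsia and refers the reader to \cite[Lemma~3.1]{Bak1} for a short self-contained proof. Your write-up is exactly the kind of argument one finds there, so you have supplied what the paper only cites.
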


This lemma is due to Garsia \cite{Gar}.  For a short self-contained proof of this fact we refer the reader to \cite[Lemma 3.1]{Bak1}.  We note that for any $\I=(i_{1},\ldots,i_{n})\in\{0,1\}^{n}$ we have
\begin{equation}\label{S(C) equation}
S_{\I}(C)=\{S_{\I}(0,0)\} \times  [0,\lambda^{n}] =  \left\{ (1-\lambda)\sum_{k=1}^{n}i_k\lambda^{k-1}\right\} \times [0,\lambda^{n}].
\end{equation} Combining Lemma \ref{Garsia's lemma} with (\ref{S(C) equation}), we see that whenever $\lambda$ is the reciprocal of a Garsia number the images of $C$ will be separated by a factor $K\cdot2^{-n}$. This property is the main tool we use in our proof of Theorem \ref{thmbernoulli}.

\begin{proof}[Proof of Theorem \ref{thmbernoulli}]
 Fix $\delta>0$ and decompose the unit square into horizontal strips of height $\lambda^k-\lambda^{k+1}$ for $k$ ranging from 0 to $k(\lambda, \delta)$, defined to be the largest integer satisfying $\lambda^{k(\lambda, \delta)+1} > \delta$.  Observe that the only part of $F_C^\lambda$ which intersect the interior of the $k$th vertical strip is
\[
\bigcup_{l=0}^{k} \ \bigcup_{\I \in \{1,2\}^l} S_{\I}(C)
\]
which is a union of vertical lines.  Within the $k$th vertical strip, each line intersects on the order of $\lambda^k/\delta$ squares from the $\delta$-mesh.  However, if two of these lines are too close to one another, they may not both contribute to the total intersections for $F_C^\lambda$.  In fact, the number of lines which make a contribution to the total intersections is on the order of $N_\delta( \Lambda(k) )$, where
\[
\Lambda(k) \ = \ \bigcup_{l=0}^{k} \ \bigcup_{\I \in \{1,2\}^l} S_{\I}(0,0),
\]
i.e. the number of base points of the lines intersecting the $k$th vertical strip which lie in different $\delta$-intervals.  This yields
\begin{eqnarray*}
N_\delta\big( F_C^\lambda \big) &\asymp& \delta^{-1} \ + \ \sum_{k=0}^{k(\lambda, \delta)} \big(\lambda^k/\delta \big) \, N_\delta( \Lambda(k) )
\end{eqnarray*}
where the $\delta^{-1}$ comes from the intersections below the $k(\lambda, \delta)$th strip.  It follows from Lemma \ref{Garsia's lemma} and subsequent discussion that
\[
N_\delta( \Lambda(k) )  \ \asymp \  \min\{2^k,\delta^{-1}\}
\]
which is the maximum value possible and where the `comparison constants' are independent of $\delta$ and $k$, but do depend on $\lambda$, which is fixed.  Let $k_0(\delta)$ be the largest integer satisfying $2^{k_0(\delta)}<\delta^{-1}$.  It follows that
\begin{eqnarray*}
N_\delta\big( F_C^\lambda \big) &\asymp& \delta^{-1} \ + \ \sum_{k=0}^{k_0(\delta)} \big(\lambda^k/\delta \big) \, 2^k   \ + \ \sum_{k=k_0(\delta)+1}^{k(\lambda, \delta)} \big(\lambda^k/\delta \big) \, \delta^{-1} \\ \\
&=& \delta^{-1} \ + \ \delta^{-1} \, \sum_{k=0}^{k_0(\delta)}(2\lambda)^k   \ + \ \delta^{-2} \, \sum_{k=k_0(\delta)+1}^{k(\lambda, \delta)} \lambda^k \\ \\
&\asymp&\delta^{-1} \ + \  \delta^{-1} \, (2\lambda)^{k_0( \delta)} \ + \ \delta^{-2}\big(\lambda^{k(\lambda,\delta)} \, - \, \lambda^{k_0(\delta)} \big) \\ \\
&\asymp&\delta^{-1} \ + \  \delta^{-1} \,\delta^{-\log(2\lambda)/\log2} \ + \ \delta^{-2}\big(\delta \, - \, \delta^{-\log\lambda/\log2} \big) \\ \\
&\asymp&  \delta^{-1 -\log(2\lambda)/\log2}
\end{eqnarray*}
which yields
\[
\overline{\dim}_\text{B} F_C^\lambda \ = \ \underline{\dim}_\text{B} F_C^\lambda \ = \ 1 + \log(2\lambda)/\log2 \ = \  \frac{\log( 4\lambda)}{\log2}
\]
as required.
\end{proof}

The above proof actually yields the following stronger result than Theorem \ref{thmbernoulli} which gives the exact rate of convergence to the box dimension.  This type of problem was considered by Lalley \cite{lalley} in the context of homogeneous self-similar sets satisfying the open set condition, where the same rate was obtained.

\begin{thm} \label{exactrate}
If $\lambda\in(1/2,1)$ is the reciprocal of a Garsia number, then
\[
\frac{\log N_\delta\big(F_C^\lambda\big)}{-\log \delta}  \ =  \ \frac{\log( 4\lambda)}{\log2} \, + \, O\Big(\frac{1}{-\log\delta} \Big).
\]
\end{thm}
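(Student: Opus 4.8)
The plan is to retrace the estimate from the proof of Theorem~\ref{thmbernoulli}, but now keeping careful track of the multiplicative constants hidden in the $\asymp$ relations, so as to convert the dimension statement into a statement about the error term. The key observation is that every step of that proof produced a comparison $N_\delta(F_C^\lambda) \asymp \delta^{-1-\log(2\lambda)/\log2}$ where the implied constants depend only on $\lambda$, which is fixed. Writing $D = \log(4\lambda)/\log 2 = 1 + \log(2\lambda)/\log 2$ for the box dimension, such a two-sided comparison with $\lambda$-dependent constants $c_1, c_2 > 0$ says precisely that
\[
c_1 \, \delta^{-D} \ \leq \ N_\delta\big(F_C^\lambda\big) \ \leq \ c_2 \, \delta^{-D}
\]
for all sufficiently small $\delta > 0$.

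The heart of the argument is then the elementary passage from this to the claimed error bound. Taking logarithms and dividing by $-\log\delta > 0$ gives
\[
D \, + \, \frac{\log c_1}{-\log\delta} \ \leq \ \frac{\log N_\delta\big(F_C^\lambda\big)}{-\log\delta} \ \leq \ D \, + \, \frac{\log c_2}{-\log\delta},
\]
so that
\[
\left| \frac{\log N_\delta\big(F_C^\lambda\big)}{-\log\delta} - D \right| \ \leq \ \frac{\max\{|\log c_1|, |\log c_2|\}}{-\log\delta},
\]
which is exactly a bound of the form $O\big(1/(-\log\delta)\big)$ since $c_1, c_2$ are fixed constants depending only on $\lambda$. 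This immediately yields the asserted equality with the stated remainder.

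The one point requiring genuine care is the claim that the constants in the chain of $\asymp$ relations really are uniform in $\delta$ and $k$. I would verify this by inspecting each line of the original computation. The comparison $N_\delta(\Lambda(k)) \asymp \min\{2^k, \delta^{-1}\}$ comes from Garsia's Lemma~\ref{Garsia's lemma} with its $\lambda$-dependent constant $K$, and the count $\lambda^k/\delta$ of squares met by each line is exact up to an additive $O(1)$, so these contribute only $\lambda$-dependent constants. The geometric sums over $k$ are bounded above and below by their dominant terms with absolute constants (using $2\lambda > 1$), and the final collapse to $\delta^{-D}$ introduces only the rounding errors from the definitions of $k_0(\delta)$ and $k(\lambda,\delta)$, which are bounded by fixed factors since replacing these integer cutoffs by the real numbers $-\log\delta/\log 2$ and $-\log\delta/\log\lambda$ changes each power by a $\lambda$-dependent multiplicative constant.

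The main obstacle I anticipate is therefore purely bookkeeping: one must confirm that the $\delta^{-1}$ additive term and the lower-order term $\delta^{-2}(\delta - \delta^{-\log\lambda/\log2})$ are genuinely dominated by $\delta^{-D}$ with a uniform constant, rather than merely being of smaller polynomial order in a way that could hide a slowly varying factor. Since $D > 1$ and the subtracted term $\delta^{-2+|\log\lambda|/\log 2}$ has exponent strictly smaller than $D$, the ratio of each lower-order contribution to $\delta^{-D}$ tends to a constant (or to zero) at a fixed polynomial rate in $\delta$, so these terms are absorbed into the constants $c_1, c_2$ without difficulty. No new idea beyond the original proof is needed; the content of Theorem~\ref{exactrate} is simply the assertion that that proof in fact establishes a two-sided power-law bound and not merely an asymptotic dimension, exactly as in Lalley's work~\cite{lalley}.
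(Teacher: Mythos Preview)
Your proposal is correct and matches the paper's own treatment: the paper gives no separate proof of Theorem~\ref{exactrate} but simply remarks that the proof of Theorem~\ref{thmbernoulli} already yields it, and what you have written is exactly the spelling-out of that remark --- namely, that the chain of $\asymp$ relations there gives $N_\delta(F_C^\lambda)\asymp \delta^{-D}$ with $\lambda$-dependent constants, whence taking logarithms and dividing by $-\log\delta$ produces the $O(1/(-\log\delta))$ error. Your checks on the uniformity of the constants in $\delta$ and $k$ are appropriate and correct.
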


\subsection{Proof of Theorem \ref{thmgenericbernoulli}}\label{thmgenericbernoulliproof}

To prove this theorem we show that a separation condition similar to that stated in Lemma \ref{Garsia's lemma} holds for a generic $\lambda\in(1/2,0.668).$ Throughout this section we let $\mathcal{J}:=(1/2,0.668)$. Before we state our separation condition we recall some results from \cite{Bak} and \cite{BenSol}.

In \cite{BenSol} Benjamini and Solomyak study the distribution of the set $$A_{n}(\lambda):=\Big\{(1-\lambda)\sum_{k=1}^{n}i_{k}\lambda^{k-1}:(i_{k})\in\{0,1\}^{n}\Big\}.$$ Given $s>0$, $\lambda\in(1/2,1)$, and $n\in\mathbb{N}$ they associate the set
$$R_{2}(s,\lambda,n):=\Big\{(a,b)\in A_{n}(\lambda)^{2}:\, a\neq b ,\, |a-b| \leq \frac{s}{2^{n}}\Big\}.$$ They conjectured that for almost every $\lambda\in(1/2,1),$ there exists $c,C>0$ such that $$cs\leq \frac{\# R_{2}(s,\lambda,n)}{2^{n}}\leq Cs$$ for all $n\in\mathbb{N}$ and $s>0.$ In \cite{BenSol} the authors did not prove this conjecture, however, they did prove several results which that it is true, one of these which is the following.

\begin{thm}[Theorem 2.1 from \cite{BenSol}]
\label{BenSolthm}
There exists $C_{1}>0$ such that $$\int_{\mathcal{J}}\frac{\# R_{2}(s,\lambda,n)}{2^{n}}\leq C_{1}s$$ for all $n\in\mathbb{N}$ and $s>0$.
\end{thm}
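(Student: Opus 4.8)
The plan is to pass from the combinatorial count $\#R_2(s,\lambda,n)$ to the Lebesgue measure of sublevel sets of the explicit one-variable functions recording differences of words, and then to control those sublevel sets \emph{uniformly in $n$} using transversality. First I would bound $\#R_2(s,\lambda,n)$ by a count over pairs of words. Writing $\Pi_{\mathbf{i},\mathbf{i}'}(\lambda)=(1-\lambda)\sum_{k=1}^{n}(i_k-i_k')\lambda^{k-1}$ for $\mathbf{i},\mathbf{i}'\in\{0,1\}^n$, every pair $(a,b)\in A_n(\lambda)^2$ with $a\neq b$ and $|a-b|\leq s2^{-n}$ is realised by at least one pair of distinct words with $|\Pi_{\mathbf{i},\mathbf{i}'}(\lambda)|\leq s2^{-n}$, so
\[
\#R_2(s,\lambda,n)\ \leq\ \#\{(\mathbf{i},\mathbf{i}')\in(\{0,1\}^n)^2:\ \mathbf{i}\neq\mathbf{i}',\ |\Pi_{\mathbf{i},\mathbf{i}'}(\lambda)|\leq s2^{-n}\}.
\]
Since $\Pi_{\mathbf{i},\mathbf{i}'}$ depends only on the difference sequence $(i_k-i_k')\in\{-1,0,1\}^n$, Fubini gives
\[
\int_{\mathcal{J}}\frac{\#R_2(s,\lambda,n)}{2^n}\,d\lambda\ \leq\ \frac{1}{2^n}\sum_{\mathbf{i}\neq\mathbf{i}'}\big|\{\lambda\in\mathcal{J}:|\Pi_{\mathbf{i},\mathbf{i}'}(\lambda)|\leq s2^{-n}\}\big|,
\]
reducing the problem to measures of sublevel sets of these polynomials.

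The key tool is the transversality property of power series with coefficients in $\{-1,0,1\}$, which is exactly what can be verified on an interval reaching up to $0.668$ — this is the source of that constant. After factoring $\Pi_{\mathbf{i},\mathbf{i}'}(\lambda)=\pm(1-\lambda)\lambda^{j-1}h(\lambda)$, where $j$ is the first index at which the words differ and $h(\lambda)=1+\sum_{m\geq1}b_m\lambda^m$ with $b_m\in\{-1,0,1\}$, transversality yields a bound $|\{\lambda\in\mathcal{J}:|h(\lambda)|\leq r\}|\lesssim r$ uniform over all such $h$. Combined with $(1-\lambda)\lambda^{j-1}\gtrsim 2^{-j}$ on $\mathcal{J}$, this controls each sublevel set measure in terms of $s2^{j-n}$.

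The hard part — the reason this is a theorem rather than a one-line estimate — is making the bound uniform in $n$. Summing the transversality estimate termwise over all pairs of words loses a factor of $n$: the contributions of the different first-difference positions $j$ each contribute of order $s$ and add up rather than telescoping, giving only $\lesssim sn$, which is exactly the naive bound that \cite{BenSol} improve upon. To remove this loss I would exploit the self-similar structure $A_{n+1}(\lambda)=\lambda A_n(\lambda)\,\cup\,\big((1-\lambda)+\lambda A_n(\lambda)\big)$: a close pair at level $n+1$ either lies inside a single copy — which contracts, since $2\lambda>1$ sends the scale $s2^{-(n+1)}$ to $(s/2\lambda)2^{-n}$ with $s/2\lambda<s$ — or is a ``cross'' pair straddling the two copies, whose integrated count is where transversality is spent and is bounded by $\lesssim s$. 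This produces a recursion for the integrated quantity with contraction ratio $1/(2\lambda)<1$, so that iterating sums a geometric series instead of $n$ equal terms.

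The main obstacle is that this contraction ratio degenerates as $\lambda\to1/2$, precisely where overlaps are mildest but the self-similar contraction is weakest; moreover the recursion is naturally pointwise in $\lambda$ whereas the hypothesis and conclusion concern the integral over $\mathcal{J}$, so the two must be interlocked carefully. The delicate point is therefore to carry out this recursion in a form that remains uniform near the left endpoint of $\mathcal{J}$ and keeps the final constant $C_1$ finite — this is exactly the transversality-plus-counting estimate established in \cite{BenSol}, which I would invoke rather than re-derive.
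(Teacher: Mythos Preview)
The paper does not prove this statement at all: it is quoted verbatim as Theorem~2.1 of \cite{BenSol} and used as a black box in the subsequent Borel--Cantelli argument. So there is no ``paper's own proof'' to compare your sketch against; your final sentence, in which you say you would invoke \cite{BenSol} rather than re-derive the estimate, is in fact exactly what the paper does, and is all that was required here.

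That said, the outline you give before deferring to \cite{BenSol} is a fair description of the Benjamini--Solomyak strategy: bound $\#R_2$ by a sum over word-pairs, rewrite the difference as $(1-\lambda)\lambda^{j-1}h(\lambda)$ with $h$ a $\{-1,0,1\}$ power series, use transversality on $\mathcal{J}=(1/2,0.668)$ to bound sublevel sets of $h$, and then exploit self-similarity to turn the naive $O(sn)$ bound into a geometric series. Your identification of the ``hard part'' --- that a termwise transversality bound loses a factor $n$ which must be recovered by a recursive/self-similar argument --- is accurate. None of this is needed for the present paper, but nothing you wrote is wrong.
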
Theorem \ref{BenSolthm} will be essential when it comes to showing that a generic $\lambda\in\mathcal{J}$ satisfies a separation property. Importantly the $C_{1}$ appearing in Theorem \ref{BenSolthm} does not depend on $n$ or $s$. In \cite{Bak} the first author studied the approximation properties of $\beta$-expansions. To understand these properties the following set was studied

$$T(s,\lambda,n):=\Big\{a\in A_{n}(\lambda): \exists b\in A_{n}(\lambda) \textrm{ satisfying } a\neq b\textrm{ and } |a-b|\leq \frac{s}{2^{n}}\Big\}.$$ In \cite{Bak} it was shown that
\begin{equation}
\label{counting bounds}
\#T(s,\lambda,n)\leq \#R_{2}(s,\lambda,n).
\end{equation} If $T(s,\lambda,n)$ is a small set then the elements of $A_{n}(\lambda)$ are well spread out within $[0,1].$ As was seen in the proof of Theorem \ref{thmbernoulli}, if the elements of $A_{n}(\lambda)$ are well spread out then $F_C^\lambda$ can be a counterexample to (\ref{expected}). We do not show that a separation condition as strong as Lemma \ref{Garsia's lemma} holds for a generic $\lambda\in\mathcal{J},$ but we can prove a weaker condition holds, a condition which turns out to be sufficient to prove Theorem \ref{thmgenericbernoulli}.

\begin{prop}
\label{Countingprop}
For Lebesgue almost every $\lambda\in\mathcal{J},$ the following inequality holds for all but finitely many $n\in\mathbb{N}$:
$$2^{n-1}\leq \#\Big\{a\in A_{n}(\lambda):|a-b|> \frac{1}{n^{2}2^{n}} \textrm{ for all }b\in A_{n}(\lambda)\setminus\{a\}\Big\}.$$
\end{prop}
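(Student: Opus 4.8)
The plan is to recognise the set being counted as the complement, inside $A_n(\lambda)$, of the set $T(1/n^2,\lambda,n)$, and then to control $\#T$ \emph{on average} over $\lambda$ using the counting estimates already at our disposal. Writing $W_n(\lambda)$ for the set of well-separated points in the statement, a point $a\in A_n(\lambda)$ fails to lie in $W_n(\lambda)$ precisely when there is some $b\in A_n(\lambda)$ with $b\neq a$ and $|a-b|\leq 1/(n^2 2^n)$, which is exactly the defining property of $T(s,\lambda,n)$ with $s=1/n^2$. Hence $A_n(\lambda)=W_n(\lambda)\sqcup T(1/n^2,\lambda,n)$ and $\#W_n(\lambda)=\#A_n(\lambda)-\#T(1/n^2,\lambda,n)$, so it suffices to bound $\#T(1/n^2,\lambda,n)$ strictly below $2^{n-1}$ for almost every $\lambda$ and all large $n$, together with the observation that $\#A_n(\lambda)=2^n$ for almost every $\lambda$.

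Next I would pass from the pointwise count to the $\lambda$-averaged count. By (\ref{counting bounds}) we have $\#T(1/n^2,\lambda,n)\leq \#R_2(1/n^2,\lambda,n)$, and applying Theorem \ref{BenSolthm} with $s=1/n^2$ gives $\int_{\mathcal{J}}\#R_2(1/n^2,\lambda,n)\,d\lambda \leq C_1 2^n/n^2$, where $C_1$ is independent of $n$. The crucial feature is that this bound, divided by the threshold $2^{n-1}$, is summable in $n$.

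Then I would run a Borel--Cantelli argument in the parameter $\lambda$. Set $B_n=\{\lambda\in\mathcal{J}:\#T(1/n^2,\lambda,n)\geq 2^{n-1}\}$. Markov's inequality combined with the two estimates above yields $|B_n|\leq 2^{-(n-1)}\int_{\mathcal{J}}\#T(1/n^2,\lambda,n)\,d\lambda \leq 2C_1/n^2$, so $\sum_n |B_n|<\infty$. By the Borel--Cantelli lemma, almost every $\lambda$ lies in only finitely many $B_n$, that is, $\#T(1/n^2,\lambda,n)<2^{n-1}$ for all but finitely many $n$. Intersecting with the full-measure set of $\lambda$ on which all $2^n$ words give distinct points (the exceptional $\lambda$ form the countable union, over pairs of distinct words $(i_k),(i_k')$, of the roots of the nonzero polynomials $\sum_k (i_k-i_k')\lambda^{k-1}$, hence a null set, giving $\#A_n(\lambda)=2^n$), we obtain $\#W_n(\lambda)=2^n-\#T(1/n^2,\lambda,n)>2^{n-1}$ for all but finitely many $n$, which is the desired conclusion.

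I expect the only genuinely delicate point to be the transfer from the averaged-over-$\lambda$ bound supplied by Theorem \ref{BenSolthm} to a statement holding for almost every \emph{fixed} $\lambda$ and all sufficiently large $n$; this is exactly where Borel--Cantelli enters, and it succeeds only because the gap is $1/(n^2 2^n)$, i.e. $s=1/n^2$, which makes the measures $|B_n|$ summable. A looser gap (larger $s$) would weaken the separation conclusion, whereas a gap whose associated measures are not summable would break the Borel--Cantelli step, so the $n^{-2}$ scaling is the convenient compromise. The remaining ingredients --- identifying the complement of $W_n(\lambda)$ as $T(1/n^2,\lambda,n)$, and discarding the null set of $\lambda$ with coincidences so that $\#A_n(\lambda)=2^n$ --- are routine bookkeeping.
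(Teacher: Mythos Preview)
Your proposal is correct and follows essentially the same route as the paper: bound $\#T(n^{-2},\lambda,n)$ by $\#R_2(n^{-2},\lambda,n)$, apply the Benjamini--Solomyak integral estimate with $s=n^{-2}$, use Markov's inequality to obtain $\mathcal{L}(B_n)\le 2C_1/n^2$, then Borel--Cantelli, and finally discard the null set of $\lambda$ satisfying a height-one polynomial so that $\#A_n(\lambda)=2^n$. Your write-up is in fact slightly more explicit than the paper's in spelling out the complementarity $A_n(\lambda)=W_n(\lambda)\sqcup T(n^{-2},\lambda,n)$ and the reason the coincidence set is null.
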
 To prove Proposition \ref{Countingprop} we use the Borel-Cantelli lemma and the counting bounds provided by Theorem \ref{BenSolthm}. The following lemma gives an upper bound on the Lebesgue measure of the set of $\lambda$ which exhibit contrary behaviour to that described in Proposition \ref{Countingprop}. The proof of this lemma is based upon an argument given in \cite{Bak}. We write $\mathcal{L}$ for Lebesgue measure.

\begin{lma}
\label{measurebounds}
We have
$$\mathcal{L}\Big(\lambda\in\mathcal{J}:2^{n-1}\leq \#T(n^{-2},\lambda,n)\Big)\leq \frac{2C_{1}}{n^{2}}.$$
\end{lma}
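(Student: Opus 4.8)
The plan is to reduce the statement to a single application of Markov's inequality, using the two ingredients already assembled: the pointwise comparison (\ref{counting bounds}) and the integral estimate of Theorem \ref{BenSolthm}. The whole point of the preceding setup is that it packages the hard transversality analysis into a clean integral bound for $\#R_2$, so the remaining argument should be essentially mechanical.

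First I would pass from $T$ to $R_2$. Since $\#T(n^{-2},\lambda,n) \leq \#R_2(n^{-2},\lambda,n)$ for every $\lambda\in\mathcal{J}$ by (\ref{counting bounds}), the set whose measure we wish to estimate is contained in the larger set
\[
\Big\{\lambda\in\mathcal{J} : 2^{n-1}\leq \#R_2(n^{-2},\lambda,n)\Big\}.
\]
It therefore suffices to bound the measure of this latter set; the direction of the inequality in (\ref{counting bounds}) is exactly what makes this reduction legitimate, as an upper bound on the count for $R_2$ now controls the size of the bad set defined via $T$.

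Next, since $\lambda \mapsto \#R_2(n^{-2},\lambda,n)$ is a non-negative measurable function on $\mathcal{J}$, Markov's inequality with threshold $2^{n-1}$ gives
\[
\mathcal{L}\Big(\lambda\in\mathcal{J} : 2^{n-1}\leq \#R_2(n^{-2},\lambda,n)\Big) \ \leq \ \frac{1}{2^{n-1}}\int_{\mathcal{J}} \#R_2(n^{-2},\lambda,n)\, d\lambda.
\]
Finally I would apply Theorem \ref{BenSolthm} with the choice $s = n^{-2}$, which yields $\int_{\mathcal{J}} \#R_2(n^{-2},\lambda,n)\, d\lambda \leq C_1 n^{-2} 2^{n}$. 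Substituting this into the display above and simplifying $2^{n}/2^{n-1} = 2$ produces precisely the claimed bound $2C_1/n^2$.

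I do not anticipate any genuine obstacle here, since the substantive work is delegated to Theorem \ref{BenSolthm}. The only points demanding care are bookkeeping ones: choosing $s = n^{-2}$ so that the threshold $2^{n-1}$ and the factor $2^{n}$ coming from Theorem \ref{BenSolthm} combine to give the factor $2$, and confirming that the comparison $\#T \leq \#R_2$ is oriented correctly so that the integral control of $R_2$ transfers to the measure estimate for $T$. This lemma then feeds directly into a Borel--Cantelli argument to establish Proposition \ref{Countingprop}, since $\sum_n 2C_1/n^2 < \infty$.
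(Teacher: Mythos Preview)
Your proposal is correct and follows essentially the same route as the paper: pass from $T$ to $R_2$ via (\ref{counting bounds}), apply Markov's inequality with threshold $2^{n-1}$, and then invoke Theorem~\ref{BenSolthm} with $s=n^{-2}$. The paper's write-up normalises by $2^n$ before applying the integral bound, but this is only a cosmetic difference in bookkeeping.
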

\begin{proof}
Observe that
\begin{eqnarray*}
\mathcal{L}\Big(\lambda\in\mathcal{J}:2^{n-1}\leq \#T(n^{-2},\lambda,n)\Big)&\leq& \mathcal{L}\Big(\lambda\in\mathcal{J}:2^{n-1}\leq \#R_{2}(n^{-2},\lambda,n)\Big) \qquad \qquad (\textrm{by }(\ref{counting bounds}))\\ \\
&=&\frac{2^{n}\mathcal{L}\Big(\lambda\in\mathcal{J}:2^{n-1}\leq\#R_{2}(n^{-2},\lambda,n)\Big)}{2^{n}}\\ \\
&\leq &2 \int_{\lambda\in\mathcal{J}:2^{n-1}\leq\#R_{2}(n^{-2},\lambda,n)}\frac{\#R_{2}(n^{-2},\lambda,n)}{2^{n}}\\ \\
&\leq& 2 \int_{\mathcal{J}}\frac{\#R_{2}(n^{-2},\lambda,n)}{2^{n}}\\ \\
&\leq& \frac{2C_{1}}{n^{2}} \qquad \qquad  (\textrm{by Theorem } \ref{BenSolthm})
\end{eqnarray*}
as required.
\end{proof}

Applying Lemma \ref{measurebounds} we see that

$$\sum_{n=1}^{\infty}\mathcal{L}\Big(\lambda\in\mathcal{J}:2^{n-1}  \leq   \#T(n^{-2},\lambda,n)\Big) \ \leq  \ \sum_{n=1}^{\infty}\frac{2C_{1}}{n^{2}} \ < \ \infty.$$ Thus, the Borel-Cantelli lemma implies that for almost every $\lambda\in\mathcal{J}$ there exists finitely many $n$ satisfying $2^{n-1}\leq \#T(n^{-2},\lambda,n).$ If $\lambda$ does not satisfy a height one polynomial then $\#A_{n}(\lambda)=2^{n}.$ Combining this statement with the above consequence of the Borel-Cantelli lemma we may conclude Proposition \ref{Countingprop}.

Note that Proposition \ref{Countingprop} implies that for Lebesgue almost every $\lambda\in\mathcal{J}$, there exists a constant $\kappa>0$ for which
\begin{equation}
\label{uniformseparation}
2^{n-1}\leq \#\Big\{a\in A_{n}(\lambda): |a-b|> \frac{\kappa}{n^{2}2^{n}} \textrm{ for all }b\in A_{n}(\lambda)\setminus\{a\}\Big\}
\end{equation} is true for \emph{every} $n\in\mathbb{N}.$ Equation (\ref{uniformseparation}) guarantees that at least $2^{n-1}$ elements of $A_{n}(\lambda)$ will be separated by a factor $\kappa(n^{2}2^{n})^{-1}$ holds for every $n\in\mathbb{N}$. The proof of Theorem \ref{thmgenericbernoulli} now follows similarly to the proof of Theorem \ref{thmbernoulli}, so we only point out the differences.

\begin{proof}[Proof of Theorem \ref{thmgenericbernoulli}]   First note that the upper estimate
\[
N_\delta\big( F_C^\lambda \big)\  \lesssim  \   \delta^{-1 -\log(2\lambda)/\log2}
 \]
 remains valid for any $\lambda \in (1/2,1)$, which yields
 \[
 \overline{\dim}_\text{B} F_C^\lambda \ \leq  \frac{\log( 4\lambda)}{\log2}
 \]
 as required.  Let $\varepsilon>0$ and consider the lower bound. In light of Proposition \ref{Countingprop} and the discussion above we can guarantee that for almost every choice of $\lambda \in \mathcal{J}$ we have
 \[
 N_\delta( \Lambda(k)) \,  \gtrsim \,  \min\{2^k,\delta^{-1}\}
 \]
 as long as
\begin{equation} \label{range00}
 \frac{1}{k^22^k}\,  \geq \,  \delta.
\end{equation}
 Let $k_1(\delta)$ denote the maximum value of $k$ for which (\ref{range00}) holds and note that
 \[
 k_1(\delta) \,  \lesssim \,  \frac{-\log \delta}{(1+\varepsilon)\log2}.
 \]
For $k \leq k_1(\delta)$ we have $2^k \leq \delta^{-1}$ and so
\[
N_\delta\big( F_C^\lambda \big)  \ \gtrsim \  \sum_{k=0}^{k_1(\delta)} \big(\lambda^k/\delta \big) \, 2^k  \ = \   \delta^{-1} \, \sum_{k=0}^{k_1(\delta)} (2\lambda )^k   \ \gtrsim \  \delta^{-1} \,  (2\lambda )^{k_1(\delta)}   \ \gtrsim \  \delta^{-1-\log (2 \lambda)/((1+\varepsilon)\log2)}
\]
which yields
\[
\underline{\dim}_\text{B} F_C^\lambda \ \geq \ 1 + \frac{\log(2\lambda)}{(1+\varepsilon)\log2}.
\]
Since $\varepsilon>0$ was arbitrary, the desired lower bound follows.
\end{proof}

 Note that the weaker separation estimate used in proving Theorem \ref{thmgenericbernoulli} means that we cannot obtain the exact rate of convergence to the box dimension as we did for reciprocals of Garsia numbers in Theorem \ref{exactrate}.

\section{Large inhomogeneous self-similar sets in higher dimensions}\label{example}

If the maps of an IFS share a common fixed point, then the attractor is trivial (a singleton). However, the inhomogeneous attractor can be still be highly non-trivial.
In this section we will exhibit an inhomogeneous self-similar set  in $\R^d$ ($d\geq 3$) of large box dimension such that $F_\emptyset$ is a singleton, and $C$ is also a singleton.
This is not possible in $\R$ or $\R^2$ (see Corollary~\ref{cor3}). Our construction is based on Drinfeld's result \cite{Drinfeld} that $SO(3)$ contains finitely generated subgroups with the spectral gap property.

\begin{theorem}\label{fromsg}
Let $d\ge 3$. There are finitely many rotations $g_1, \ldots, g_k\in SO(d)$ and $\eps>0$ such that for any $x\in \R^d$ with $\norm{x}=1$,
$$N_{\delta}(G^n(x)) \ge \min\Big\{(1+\eps)^n, \ \eps \delta^{-(d-1)}\Big\}$$
where
$$G^n(x)=\{g_\I(x)\,:\,\I\text{ is a multi-index of length }n\}.$$
\end{theorem}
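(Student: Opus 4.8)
The plan is to \emph{choose} the rotations $g_1,\dots,g_k$ to be a symmetric set generating a subgroup of $SO(d)$ with the \emph{spectral gap property} on $L^2(S^{d-1})$, and then to read off the counting estimate from the resulting quantitative equidistribution of orbits on the sphere. The existence of such generators is exactly the spectral gap input: for $d=3$ it is Drinfeld's theorem \cite{Drinfeld}, and for general $d\ge 3$ one invokes the analogous results for $SO(d)$. Since $\norm{x}=1$ forces every $g_\I(x)$ onto $S^{d-1}$, I would encode the orbit by the empirical probability measure $\mu_n=k^{-n}\sum_{|\I|=n}\delta_{g_\I(x)}$ and the averaging operator $Sf=k^{-1}\sum_i f\circ g_i$, which satisfy $\int f\,d\mu_n=S^n f(x)$. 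The spectral gap means precisely that $\|S|_{L^2_0(S^{d-1})}\|\le 1-\kappa$ for some $\kappa>0$, where $L^2_0$ denotes the mean-zero subspace; I would eventually take $1+\eps$ slightly below $(1-\kappa)^{-2}>1$.

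The heart of the argument is an $L^2$ estimate that converts equidistribution into a cell count. Fix a smooth zonal mollifier $\psi_\delta$ at scale $\delta$ on $S^{d-1}$, normalised so that the associated $SO(d)$-equivariant convolution operator $\Psi_\delta$ fixes constants, and set $F_n=\Psi_\delta\mu_n$, a smooth function with $\int F_n\,d\sigma=1$ for $\sigma$ the normalised surface measure. As $F_n$ is essentially supported on the $\delta$-neighbourhood of $G^n(x)$, whose $\sigma$-measure is $\lesssim \delta^{d-1}N_\delta(G^n(x))$, Cauchy--Schwarz gives
\[
1=\int F_n\,d\sigma\ \lesssim\ \big(\delta^{d-1}N_\delta(G^n(x))\big)^{1/2}\,\norm{F_n}_2,
\]
so that $N_\delta(G^n(x))\gtrsim \big(\delta^{d-1}\norm{F_n}_2^2\big)^{-1}$, and it remains to bound $\norm{F_n}_2$ from above.

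For this I would expand into spherical harmonics $L^2(S^{d-1})=\bigoplus_{\ell\ge 0}H_\ell$. The structural point is that every $g_i\in SO(d)$ preserves each $H_\ell$, hence so does $S$: it is the identity on $H_0$, while the spectral gap yields $\|S|_{H_\ell}\|\le 1-\kappa$ for all $\ell\ge 1$. Writing $Z_{\ell,x}\in H_\ell$ for the zonal reproducing harmonic at $x$, for which $\norm{Z_{\ell,x}}_2^2=\dim H_\ell\asymp\ell^{d-2}$, Parseval's identity within $H_\ell$ gives, for any orthonormal basis $\{Y_{\ell,j}\}$ of $H_\ell$, the bound $\sum_j|S^nY_{\ell,j}(x)|^2=\|(S^\ast)^nZ_{\ell,x}\|_2^2\le\|S|_{H_\ell}\|^{2n}\dim H_\ell$, which is $\le(1-\kappa)^{2n}\dim H_\ell$ for $\ell\ge 1$ and equals $1$ for $\ell=0$. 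Since $\Psi_\delta$ acts on $H_\ell$ as a scalar $\hat\psi_\delta(\ell)$ with $\hat\psi_\delta(\ell)\asymp 1$ for $\ell\lesssim\delta^{-1}$ and rapid decay beyond, and $\sum_\ell\hat\psi_\delta(\ell)^2\dim H_\ell\asymp\norm{\psi_\delta}_2^2\asymp\delta^{-(d-1)}$, summing the contributions yields
\[
\norm{F_n}_2^2=\sum_{\ell\ge 0}\hat\psi_\delta(\ell)^2\sum_j|S^nY_{\ell,j}(x)|^2\ \le\ 1+(1-\kappa)^{2n}\sum_{\ell\ge 1}\hat\psi_\delta(\ell)^2\dim H_\ell\ \lesssim\ 1+(1-\kappa)^{2n}\delta^{-(d-1)}.
\]
Combining the two displays gives $N_\delta(G^n(x))\gtrsim\big(\delta^{d-1}+(1-\kappa)^{2n}\big)^{-1}\gtrsim\min\{(1-\kappa)^{-2n},\,\delta^{-(d-1)}\}$, and choosing $\eps>0$ small enough to absorb the implied constants (with $1+\eps\le(1-\kappa)^{-2}$) gives the claimed inequality.

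I expect two points to require the most care. The first is purely a matter of citation: one must invoke the spectral gap for all $d\ge 3$, not only the $d=3$ case covered by Drinfeld. The second is making the harmonic-analytic bookkeeping rigorous: treating $\mu_n$ as a distribution, justifying $\sum_j|S^nY_{\ell,j}(x)|^2=\|(S^\ast)^nZ_{\ell,x}\|_2^2$ via the reproducing kernel, and verifying the mollifier estimates $\hat\psi_\delta(\ell)\asymp 1$ for $\ell\lesssim\delta^{-1}$ together with $\sum_\ell\hat\psi_\delta(\ell)^2\dim H_\ell\asymp\delta^{-(d-1)}$. The conceptual crux that produces \emph{both} branches of the minimum is that $S$ simultaneously preserves harmonic degree and uniformly contracts every nonconstant degree: the high-frequency mass of the smoothed orbit measure decays like $(1-\kappa)^{2n}$ until it reaches the ``noise floor'' $\delta^{d-1}$ imposed by the mollification scale, at which point the orbit has equidistributed down to scale $\delta$ and fills $\asymp\delta^{-(d-1)}$ cells.
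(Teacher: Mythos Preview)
Your proposal is correct and follows the same overall strategy as the paper: invoke the spectral gap for the averaging operator on $L^2(S^{d-1})$ (Drinfeld for $d=3$; Margulis and Sullivan for $d\ge 4$), derive an $L^2$ bound of the form $\|\,\cdot\,\|_2^2\lesssim 1+(1-\kappa)^{2n}\delta^{-(d-1)}$, and convert it via Cauchy--Schwarz into the counting estimate $N_\delta(G^n(x))\gtrsim(\delta^{d-1}+(1-\kappa)^{2n})^{-1}$.

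The implementations differ. You work on the measure side: smooth the empirical orbit measure $\mu_n$ by a zonal mollifier, expand in spherical harmonics, and use the reproducing-kernel identity $\sum_j|S^nY_{\ell,j}(x)|^2=\|(S^\ast)^nZ_{\ell,x}\|_2^2$ to bound each degree. The paper works on the dual (function) side with a much lighter touch: it takes $f=\mathbf{1}_{B(x,\delta)\cap S^{d-1}}$, applies the spectral gap directly to get $\|A^nf-\int f\|_2\le(1-\eps)^n\|f\|_2$, and then observes that $\mathrm{supp}(A^nf)$ lies in the $\delta$-neighbourhood of $G^n(x)$; a two-line Cauchy--Schwarz argument on and off the support gives $\|A^nf-\int f\|_2^2\ge(1/\lambda-1)(\int f)^2$ where $\lambda$ is the measure of the support, and comparing yields the same final inequality. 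The paper's route avoids spherical harmonics, zonal kernels, and mollifier estimates entirely, so the bookkeeping you flag as delicate simply does not arise there; your approach, while correct, is more machinery-heavy than necessary.
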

\begin{proof}
Let $S^{d-1}$ be the unit sphere in $\R^d$ endowed with the probability Lebesgue measure.
 Let $L^2(S^{d-1})$ denote the $L^2$ space of real valued functions $f:S^{d-1}\to \R$.
 By Drinfeld \cite{Drinfeld} (when $d=3$) and by Margulis \cite{Margulis} and Sullivan \cite{Sullivan} (when $d\ge 4$), there exist rotations $g_1, \ldots, g_k$ and $\eps>0$ for which the operator
$$A: L^2(S^{d-1}) \to L^2(S^{d-1}),$$
$$Af(x)=\frac{1}{k} \sum_{i=1}^k f(g_i^{-1}(x))
$$
has a spectral gap, namely
$$\norm{Af}_2\le (1-\eps) \norm{f}_2 \quad \text{ whenever }\int f=0.$$

Let $f$ be the function which is $1$ on the $\delta$ neighbourhood of $x$ in $S^{d-1}$, and zero otherwise. Then $\int f\approx \delta^{d-1}$. We have
\beq\label{sg}
\norm{A^nf-\textstyle\int f}_2= \norm{A^n(f-\textstyle\int f)}_2\le (1-\eps)^n \norm{f-\textstyle \int f}_2 \le O(1) (1-\eps)^n \delta^{(d-1)/2}.
\eeq
Notice that $\int A^n f =\int f$. Let $E\subset S^{d-1}$ denote the support of $A^nf$ and $\lambda$ denote the measure of $E$. Observe that $E$ is contained by the $\delta$-neighbourhood of $G^n(x)$, which implies
\beq\label{lnd}
\lambda= O(1)\delta^{d-1} N_\delta(G^n(x)).
\eeq

Then
$$\int_{S^{d-1}\setminus E} (A^nf-{\textstyle\int} f)^2 =   (1-\lambda) ({\textstyle\int} f)^2 $$
and by Cauchy--Schwarz,
$$\int_{E} (A^nf-{\textstyle\int} f)^2 \int_{E} 1 \ge \left(\int_E A^nf-{\textstyle\int}f\right)^2 = (1-\lambda)^2 ({\textstyle\int} f)^2.$$
Combining these two, we obtain
$$\int_{S^{d-1}} (A^nf-{\textstyle\int} f)^2 \ge   \Big(1-\lambda+(1-\lambda)^2/\lambda\Big) ({\textstyle\int} f)^2  = (1/\lambda-1) ({\textstyle\int} f)^2.$$
Comparing this to \eqref{sg} gives
$$1/\lambda \le 1 + O(1)(1-\eps)^{2n} \delta^{-(d-1)}.$$
Using \eqref{lnd} we obtain
$$1/N_{\delta}(G^n(x)) \le O(1)\delta^{d-1} + O(1)(1-\eps)^{2n}.$$
%
%
This implies the theorem (with a different $\eps$).
\end{proof}

\begin{theorem}\label{sgconseq}
Let $d\ge 3$. For every $\eps>0$ there is an IFS of similarities in $\R^d$ such that the attractor, $F_\emptyset$, consists of $1$ point, but
$$\underline{\dim}_\text{\emph{B}}\, F_C\ge d-1-\eps$$
whenever $C$ is a singleton not equal to $F_\emptyset$.
\end{theorem}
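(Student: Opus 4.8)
The plan is to build the IFS directly from the rotations supplied by Theorem~\ref{fromsg}, turning them into contracting similarities sharing a common fixed point. Write $g_1,\dots,g_k\in SO(d)$ and $\eps_0>0$ for the data given by Theorem~\ref{fromsg}, and fix a contraction ratio $c\in(0,1)$ to be chosen at the very end (close to $1$). I would define the similarities $S_i(x)=c\,g_i(x)$ for $i=1,\dots,k$ on a large closed ball $X=\bar B(0,\rho)$ (any $\rho\ge\|p\|$ works once $C=\{p\}$ is fixed, since $\norm{S_i(y)}=c\norm{y}\le\rho$). All $S_i$ fix the origin, so the homogeneous attractor satisfies $\{0\}=\bigcup_i S_i(\{0\})$ and hence $F_\emptyset=\{0\}$ by uniqueness. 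Thus the first requirement holds for every choice of $c$, independently of $C$.

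Next I would fix any singleton $C=\{p\}$ with $p\neq 0=F_\emptyset$, and set $r=\norm{p}>0$ and $x=p/r\in S^{d-1}$. For a word $\I$ of length $n$ one has $S_\I(p)=c^{n}g_\I(p)=c^{n}r\,g_\I(x)$, so the level-$n$ images of the condensation point form a rescaled rotation orbit,
\[
\{S_\I(p):|\I|=n\}=c^{n}r\,G^{n}(x),
\]
a copy of $G^{n}(x)$ scaled by $c^{n}r$ and lying on the sphere of radius $c^{n}r$ about the origin. Since $F_C=\overline{\mathcal O}$ and box dimension ignores closure, $N_\delta(F_C)$ is at least the number of $\delta$-mesh cubes met by this single level. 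Using the standard scaling relation $N_\delta(t\,A)\asymp N_{\delta/t}(A)$ together with the uniform lower bound of Theorem~\ref{fromsg} (valid for \emph{every} unit vector $x$), I would record, for each $n$,
\[
N_\delta(F_C)\ \gtrsim\ N_{\delta/(c^{n}r)}\big(G^{n}(x)\big)\ \ge\ \min\Big\{(1+\eps_0)^{n},\ \eps_0\,(c^{n}r/\delta)^{d-1}\Big\}.
\]

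The heart of the argument is then an optimal choice of level $n=n(\delta)$ for each $\delta$. I would take $n(\delta)$ to be the largest integer for which the first term dominates, i.e. $(1+\eps_0)^{n}\le \eps_0(c^{n}r/\delta)^{d-1}$; for this $n$ the bound reads $N_\delta(F_C)\gtrsim(1+\eps_0)^{n(\delta)}$, while maximality forces $(1+\eps_0)^{n(\delta)+1}>\eps_0(c^{n(\delta)+1}r/\delta)^{d-1}$. Taking logarithms in the latter and writing $\alpha=\log(1+\eps_0)-(d-1)\log c>0$ (positive because $c<1$) yields $n(\delta)\ge \big(-(d-1)\log\delta\big)/\alpha-O(1)$, whence
\[
N_\delta(F_C)\ \gtrsim\ \delta^{-\gamma},\qquad \gamma=\gamma(c)=\frac{(d-1)\log(1+\eps_0)}{\log(1+\eps_0)-(d-1)\log c}.
\]
Crucially this holds for \emph{all} sufficiently small $\delta$ (the integrality of $n(\delta)$ costs only bounded multiplicative constants), so it bounds the lower box dimension: $\underline{\dim}_\text{B}F_C\ge\gamma(c)$.

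Finally I would close the loop by letting $c\to 1^-$: since $\log c\to 0$ we have $\gamma(c)\to d-1$, so there is $c<1$ with $\gamma(c)\ge d-1-\eps$, and fixing this $c$ produces an IFS with the required properties simultaneously for every admissible $C$ (the data $g_i,\eps_0$, and hence $c$, do not depend on $p$; only the comparison constants do). The one point needing care is the uniform-in-$\delta$ step: I must check that at $n(\delta)$ the rescaling is genuinely resolved, i.e. $\delta/(c^{n}r)\to 0$, so that $N_\delta(t A)\asymp N_{\delta/t}(A)$ applies and the orbit points on the sphere of radius $c^{n}r$ occupy distinct $\delta$-cubes; this follows because at $n(\delta)$ the value $(1+\eps_0)^{n}$ is large, forcing $(c^{n}r/\delta)^{d-1}$ to be large as well. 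I expect this optimization over the discrete family of shrinking spheres---rather than the spectral-gap estimate itself, which is packaged in Theorem~\ref{fromsg}---to be the only genuinely delicate part.
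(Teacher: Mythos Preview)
Your proposal is correct and follows essentially the same route as the paper: build the IFS as $S_i(x)=c\,g_i(x)$ from the spectral-gap rotations, note $F_\emptyset=\{0\}$, bound $N_\delta(F_C)$ from below by a single level $c^n G^n(x)$, and optimise $n$ against $\delta$ to obtain the exponent $\gamma(c)=\dfrac{(d-1)\log(1+\eps_0)}{\log(1+\eps_0)-(d-1)\log c}\to d-1$ as $c\to 1$. The paper carries this out along the discrete scales $\delta=c^m$ and writes the bound as $(1-\alpha(c))(d-1)$ with $\alpha(c)=\dfrac{(d-1)\log(1/c)}{\log(1+\eps_0)-(d-1)\log c}$, which is algebraically identical to your $\gamma(c)$; your version is, if anything, a touch more explicit about the scaling relation and the uniformity in $\delta$.
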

\begin{proof}
Let $g_1,\ldots, g_k$ and $\eps>0$ be as in Theorem~\ref{fromsg}. Fix $c<1$ sufficiently close to $1$. Consider the contractive similarities
$$S_i(x)=c\cdot g_i(x).$$
Then $F_\emptyset=\{0\}$ and let  $C=\{x\}$ for some $x\neq 0$, which we may assume satisfies $\norm{x}=1$. Then
$$F_C=\{0\} \cup \bigcup_{n=0}^\infty c^n G^n(x).$$
Hence, for any $m,n \in \mathbb{N},$
$$N_{c^{m}}(F_C) \ge N_{c^{m}}(c^n G^n(x))=N_{c^{m-n}} (G^n(x))\ge \min\Big\{(1+\eps)^n, \eps c^{(n-m)(d-1)}\Big\}.$$
It can be shown that $(1+\eps)^n\geq \eps c^{(n-m)(d-1)}$ for $n=\alpha(c)m+O(1)$ where
\[
\alpha(c):=\frac{(d-1)\log 1/c}{\log(1+\eps)-(d-1)\log c}.
\]
 Note that $\alpha(c)\to 0$ as $c\to 1$. This choice of $n$ then yields
\[
\underline{\dim}_\text{B}(F_C)\geq (1-\alpha(c))(d-1).
\]
Choosing $c$ sufficiently close to $1$ proves our result.
\end{proof}

\begin{remark}
When $C$ and $F_\emptyset$ are singletons, the (lower and upper) box dimension of $F_C$ is always less than $d-1$ (see Corollary~\ref{cor4}).
\end{remark}

\section{New upper bounds}\label{upperbounds}

\subsection{Upper bound for the box dimension of an inhomogeneous self-similar set}
Recall that $S_i:\R^d\to \R^d$ ($i\in \iI$) are contracting similarity maps with scaling ratio $c_i$ defining the self-similar set $F_\emptyset\subset \R^d$. We assume that $C\subset \R^d$.
To simplify notation in this section, we will write:
\begin{align*}
s &= \text{similarity dimension of }F_\emptyset \text{ (with the given similarities)}; \\
\alpha & = \dim_\text{H} F_\emptyset = \dim_\text{B} F_\emptyset;\\
\beta & = \overline{\dim}_\text{B} C.
\end{align*}
Recall that the box dimension of a (homogeneous) self-similar set always exists and equals the Hausdorff dimension, see \cite[Corollary 3.3]{techniques}.

\begin{defi}
Assuming $S_i(x)=c_i M_i(x)+b_i$ for an orthogonal matrix $M_i$ and $b_i\in \R^d$, let $T_i(x)=c_i M_i(x)$. Let $G_C$ be the inhomogeneous self-similar set defined by the maps $T_i$ ($i\in\iI$) and $C$.
\end{defi}

\begin{defi}
For $k\ge 0$ let $\iI_k$ denote the set of those multi-indices $\I$ for which $2^{-k-1}< c_\I \le 2^{-k}$.
\end{defi}


The similarity dimension $s$ gives the bound
\beq\label{simineq}
|\iI_k| \le  2^{(s+o_k(1))k}
\eeq
where we use the standard $o_k$ notation; i.e. $o_k(f(k))/f(k)$ tends to zero as $k\to\infty$ (this sequence may depend on the IFS).  To see this, observe that
\[
|\iI_k| \le \sum_{I \in \iI_k}  \big(2^{k+1} c_I \big)^s \leq 2^{sk+s} \sum_{I \in \iI_k}  c_I^s \leq  2^{sk+s} O(1) = 2^{(s+o_k(1))k}.
\]
From the definition of upper box dimension, we immediately have
\beq\label{boxdim}
\Nn(C)\le 2^{\beta n + o_n(n)}.
\eeq

\begin{defi}
Let $\gamma \ge 0$ be the unique real number for which
$$\limsup_{k\to\infty} |\{T_\I : \I\in \iI_k\}|^{1/k} = 2^{\gamma}.$$
(In fact, this sequence is essentially sub-multiplicative and therefore the limit exists.)
\end{defi}

Note that $\gamma\le s$ by \eqref{simineq}.

\begin{lemma}
If $d=1$ or $d=2$ (that is, $F_C$ is a subset of $\R$ or $\R^2$) \emph{or} the matrices $M_i$ are commuting, then $\gamma=0$.
\end{lemma}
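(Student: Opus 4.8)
The plan is to show that in every case the cardinality $|\{T_\I : \I\in\iI_k\}|$ grows only polynomially in $k$, which forces $\limsup_k |\{T_\I : \I\in\iI_k\}|^{1/k}=1=2^0$ and hence $\gamma=0$. The starting point is the factorisation $T_\I = c_\I M_\I$, where $M_\I = M_{i_1}\cdots M_{i_\ell}$ is again orthogonal. Since $c_\I$ is the scaling ratio of $T_\I$ (so $c_\I=\norm{T_\I}$ as an operator) and $M_\I = c_\I^{-1}T_\I$ is its orthogonal part, the assignment $T_\I\mapsto (c_\I, M_\I)$ is injective; thus it suffices to bound the number of distinct pairs $(c_\I,M_\I)$ arising from $\I\in\iI_k$. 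First I would record that every $\I\in\iI_k$ has bounded length: from $2^{-k-1}<c_\I$ and $c_\I\le (\max_i c_i)^{\ell}$ one obtains $\ell=|\I|\le Ck$ for a constant $C$ depending only on the IFS, so throughout we only encounter words of length at most $Ck$. Write $m=|\iI|$ for the (fixed, finite) number of maps.

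Next I would control the scaling ratios. Writing $n_i$ for the number of occurrences of the symbol $i$ in $\I$, the product $c_\I=\prod_i c_i^{n_i}$ depends only on the count vector $(n_1,\dots,n_m)$, and the number of such vectors with $\sum_i n_i\le Ck$ is $O(k^m)$. Hence $|\{c_\I:\I\in\iI_k\}|$ is polynomial in $k$, and it remains to bound the number of distinct orthogonal parts $M_\I$.

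In the commuting case this is immediate: if the $M_i$ pairwise commute then $M_\I$ is unchanged under reordering, so, like $c_\I$, it depends only on the count vector, giving $O(k^m)$ possibilities; the case $d=1$ (where $M_i\in\{\pm1\}$) is a special instance of this. The only genuine work is the non-commutative plane case $d=2$, where I would exploit that $O(2)$ is virtually abelian. Fixing the reflection $F=\mathrm{diag}(1,-1)$, write each $M_i$ as $R_{\theta_i}$ or $R_{\theta_i}F$, and let $\delta_i\in\{0,1\}$ indicate the latter. Using the relation $FR_\theta=R_{-\theta}F$ to push all reflections to the right of the product $M_\I$ yields the normal form $M_\I=R_\Theta F^{P}$ with $P\equiv\sum_j\delta_{i_j}\pmod 2$ and
\[
\Theta \;=\; \sum_{j=1}^{\ell}(-1)^{\delta_{i_1}+\cdots+\delta_{i_{j-1}}}\,\theta_{i_j}\;=\;\sum_{i=1}^{m}e_i\,\theta_i \pmod{2\pi},
\]
where each integer coefficient $e_i$ is a sum of $n_i$ signs, so $|e_i|\le n_i\le Ck$. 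Thus $M_\I$ is determined by $P$ together with the integer vector $(e_1,\dots,e_m)$, which ranges over at most $2(2Ck+1)^m$ values — polynomial in $k$.

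Combining the two bounds, $|\{T_\I:\I\in\iI_k\}|\le |\{c_\I\}|\cdot|\{M_\I\}|$ is polynomial in $k$ in every case, whence $|\{T_\I:\I\in\iI_k\}|^{1/k}\to 1$ and $\gamma=0$. The one step requiring care is the plane case: I must verify the sign bookkeeping in the normal form $M_\I=R_\Theta F^{P}$ — equivalently, that moving each reflection past a rotation negates its angle — so as to guarantee that $\Theta$ is an integer combination of the finitely many base angles $\theta_i$ with coefficients bounded by the symbol counts. This is the main obstacle, though it is a routine $O(2)$ computation once set up correctly.
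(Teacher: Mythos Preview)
Your proof is correct and follows essentially the same approach as the paper: show that $|\{T_\I:\I\in\iI_k\}|$ grows only polynomially in $k$ by bounding the number of distinct scaling ratios and orthogonal parts, using commutativity (covering $d=1$ and the commuting-matrix hypothesis) and, for $d=2$, the relation $FR_\theta=R_{-\theta}F$ to put $M_\I$ into a normal form depending on finitely many bounded integer parameters. The paper's version is terser --- it simply notes that rotations of $\R^2$ commute and that $RM=M^{-1}R$ for any reflection $R$ and rotation $M$ --- whereas you have written out the explicit normal form and coefficient bounds, but the underlying argument is the same.
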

\begin{proof}
If the matrices are commuting then the maps $T_i$ are commuting, hence every $T_\I$ ($\I\in \iI$) is of the form
$$\prod_{i\in \iI} T_i^{n_i}$$
with $c_i^{n_i} \ge 2^{-k-1}$.
Therefore $|\{T_\I : \I\in \iI_k\}|$ is at most polynomial in $k$, implying $\gamma=0$.

If $d=1$ then the maps are commuting. For $d=2$, notice that it is enough to show that $|\{M_\I : \I\in\iI_k\}|^{1/k}\to 0$. This then follows by noticing that rotations of $\R^2$ commute and that $R M = M^{-1} R$ for any reflection $R$ and rotation $M$.
\end{proof}

For sets $X, Y\subset \R^d$, let $X+Y=\{x+y:x\in X, y\in Y\}$ and when $X\subset \R$ let $XY=\{xy: x\in X, y\in Y\}$.
\begin{lemma}\label{sum}
Let $X, Y\subset \R^d$. Then $N_\delta(X+Y) \le 2^d N_\delta(X) N_\delta(Y).$
\end{lemma}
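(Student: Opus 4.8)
The plan is to run a direct covering argument, exploiting the fact that the $\delta$-mesh cells all begin at integer multiples of $\delta$, so that the Minkowski sum of two mesh cells is again aligned with the mesh and is covered by exactly $2^d$ cells. First I would record the one-dimensional computation at the heart of the matter: if $Q_X = [a\delta, (a+1)\delta)$ and $Q_Y = [b\delta, (b+1)\delta)$ are two mesh intervals, then every sum $x+y$ with $x\in Q_X$, $y\in Q_Y$ lies in $[(a+b)\delta, (a+b+2)\delta)$, an interval of length $2\delta$ whose left endpoint is the integer multiple $(a+b)\delta$. Such an interval meets precisely the two mesh cells $[(a+b)\delta, (a+b+1)\delta)$ and $[(a+b+1)\delta, (a+b+2)\delta)$. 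Taking products over the $d$ coordinates, the set $Q_X + Q_Y$ is contained in the union of exactly $2^d$ cubes of the $\delta$-mesh.

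Next I would set up the covering. Let $\mathcal{Q}_X$ be the collection of $\delta$-mesh cubes that intersect $X$, so $|\mathcal{Q}_X| = N_\delta(X)$, and define $\mathcal{Q}_Y$ analogously. Because the half-open mesh cubes partition $\R^d$, every point $x\in X$ lies in a unique mesh cube, which necessarily belongs to $\mathcal{Q}_X$; likewise every $y\in Y$ lies in a unique cube of $\mathcal{Q}_Y$. Hence for any $z = x+y \in X+Y$ there are cubes $Q_X\in\mathcal{Q}_X$ and $Q_Y\in\mathcal{Q}_Y$ with $z\in Q_X + Q_Y$, giving the inclusion
\[
X+Y \ \subseteq \ \bigcup_{Q_X\in\mathcal{Q}_X}\ \bigcup_{Q_Y\in\mathcal{Q}_Y} \big(Q_X + Q_Y\big).
\]

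Finally I would combine the two observations. By the first step each of the $N_\delta(X)\,N_\delta(Y)$ sets $Q_X+Q_Y$ is covered by at most $2^d$ mesh cubes, so the right-hand side above — and therefore $X+Y$ — meets at most $2^d N_\delta(X) N_\delta(Y)$ cubes of the $\delta$-mesh. This is exactly the claimed bound $N_\delta(X+Y) \le 2^d N_\delta(X) N_\delta(Y)$. I do not anticipate a genuine obstacle here: the content is entirely in the alignment observation of the first step, and the only point requiring a little care is that the half-open convention makes the mesh a genuine partition, which is what lets every summand be assigned to a cube in $\mathcal{Q}_X$ respectively $\mathcal{Q}_Y$ without loss.
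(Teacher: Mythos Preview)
Your argument is correct and is essentially the same as the paper's: both hinge on the observation that the Minkowski sum of two half-open $\delta$-mesh cubes is a half-open $2\delta$-cube aligned with the $\delta$-mesh, hence meets exactly $2^d$ mesh cubes. The paper packages this via the map $p(x,y)=x+y$ on $\R^d\times\R^d$ applied to products $U_i\times V_j$, whereas you work directly with the sums $Q_X+Q_Y$, but the content and the count are identical.
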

\begin{proof}
Consider the product set $X \times Y \subseteq \mathbb{R}^{d} \times \mathbb{R}^{d}$ and the map $p:\mathbb{R}^{d} \times \mathbb{R}^{d} \to \mathbb{R}^d$ defined by $p(x,y) = x+y$. Let $\{U_i\}_i$ and $\{V_j\}_j$ be the half open $\delta$-cubes intersecting $X$ and $Y$ respectively.  Then $\{p(U_i \times V_j)\}_{i,j}$ is the set of half open $2\delta$-cubes intersecting $X + Y$.  Each of these cubes intersects $2^d$ half open $\delta$-cubes, which proves the result.
\end{proof}

\begin{defi}
Let $$F^k_C=\bigcup_{\I\in \iI_k} S_\I(C) \qquad \text{and} \qquad G^k_C=\bigcup_{\I\in \iI_k} T_\I(C).$$
\end{defi}

\begin{prop}\label{prop1}
Let $0\le k\le n$. Then
\begin{align}
\Nn(F^k_C) & \le 2^{o_n(n)} 2^{ks+(n-k)\beta}\label{1} \\
\Nn(F^k_C) & \le 2^{o_n(n)} 2^{k\alpha + (n-k)d}\label{2} \\
\Nn(F^k_C) & \le 2^{o_n(n)} 2^{n\alpha + (n-k)(\beta+d-1)}\label{3} \\
\Nn(F^k_C) & \le 2^{o_n(n)} 2^{n\alpha+k\gamma+(n-k)\beta}
\label{4}.
\end{align}
\end{prop}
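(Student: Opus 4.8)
The plan is to bound $\Nn(F^k_C)$ for each $\I\in\iI_k$ separately and then sum over the $|\iI_k|$ multi-indices, using the structure $F^k_C=\bigcup_{\I\in\iI_k} S_\I(C)$ together with the three available counting inputs: the similarity-dimension bound \eqref{simineq}, the box-dimension bound \eqref{boxdim} on $C$, and the bound coming from $\gamma$. First I would record the basic single-image estimate. Since $S_\I$ is a similarity with ratio $c_\I\asymp 2^{-k}$, covering $S_\I(C)$ by $\delta=2^{-n}$ cubes is the same (up to a bounded factor) as covering $C$ by cubes of size $2^{-n}/c_\I\asymp 2^{-(n-k)}$, so
\beq\label{single}
\Nn(S_\I(C)) \le 2^{o_n(n)}\,\min\Big\{\Nnk(C),\ 2^{(n-k)d}\Big\} \le 2^{o_n(n)}\,\min\Big\{2^{(n-k)\beta},\ 2^{(n-k)d}\Big\},
\eeq
where the first alternative uses \eqref{boxdim} and the second is the trivial bound (any bounded set in $\R^d$ meets at most $O(2^{(n-k)d})$ cubes of side $2^{-(n-k)}$). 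The two crude bounds \eqref{1} and \eqref{2} then follow immediately by summing \eqref{single} over all $\I\in\iI_k$ and invoking \eqref{simineq} to get the factor $|\iI_k|\le 2^{(s+o_k(1))k}$: inequality \eqref{1} pairs $|\iI_k|\le 2^{(s+o)k}$ with the $2^{(n-k)\beta}$ alternative, and \eqref{2} uses $|\iI_k|\le 2^{(\alpha+o)k}$ (valid because the distinct maps $S_\I$, $\I\in\iI_k$, up to translation act on the self-similar set $F_\emptyset$ whose box dimension is $\alpha$, so their count is $2^{(\alpha+o)k}$) together with the trivial $2^{(n-k)d}$ alternative.

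The substantive inequalities are \eqref{3} and \eqref{4}, and here the point is that naive summation over-counts: many of the sets $S_\I(C)$ sit in nearly the same place, so their $\delta$-covers overlap heavily. The key idea is to split each $S_\I$ into its \emph{base point} $S_\I(0)$ (a point of $F_\emptyset$, essentially) and its \emph{linear part} $T_\I$, via $S_\I(C)\subset S_\I(0)+T_\I(C)$, so that
$$
F^k_C \subset \Big(\{S_\I(0):\I\in\iI_k\}\Big) + \Big(\bigcup_{\I\in\iI_k} T_\I(C)\Big) = \{S_\I(0):\I\in\iI_k\} + G^k_C,
$$
and then apply the sumset estimate of Lemma~\ref{sum}:
\beq\label{splitplan}
\Nn(F^k_C)\le 2^d\,\Nn\big(\{S_\I(0):\I\in\iI_k\}\big)\cdot \Nn(G^k_C).
\eeq
The base-point set is (a scaled piece of) $F_\emptyset$ resolved at level $\asymp 2^{-k}$, so $\Nn(\{S_\I(0)\})\le 2^{o_n(n)}2^{n\alpha}$ using $\dim_\text{B}F_\emptyset=\alpha$ (here one covers $F_\emptyset$ at scale $2^{-n}$, giving $2^{n\alpha+o}$; the coarser scale of the base points only helps). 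It then remains to bound $\Nn(G^k_C)$, and this is where the two inequalities diverge. For \eqref{4} I would bound $G^k_C=\bigcup_{\I\in\iI_k}T_\I(C)$ by first grouping the $T_\I$ according to their \emph{value} (not their index): there are only $|\{T_\I:\I\in\iI_k\}|\le 2^{(\gamma+o_k(1))k}$ distinct linear maps, each contributing $\Nn(T_\I(C))\le 2^{o_n(n)}2^{(n-k)\beta}$ by the single-image bound \eqref{single}, so $\Nn(G^k_C)\le 2^{o_n(n)}2^{k\gamma+(n-k)\beta}$; combined with \eqref{splitplan} this gives exactly \eqref{4}.

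For \eqref{3} the mechanism is different: instead of the algebraic multiplicity $\gamma$ I would use a \emph{geometric dimension-reduction}. The sets $T_\I(C)$ all pass through the origin (since each $T_\I$ is linear), and each is a scaled rotated copy of $C$ of diameter $\asymp 2^{-k}$; the union over all rotations of a single scaled copy of $C$ lives in a shell of inner/outer radius $\asymp 2^{-k}$, and covering such a rotational orbit at scale $2^{-n}$ costs roughly (number of cubes across $C$ at scale $2^{-(n-k)}$) $\times$ (number of cubes needed to sweep the $(d-1)$ angular directions at the same scale), which is $2^{o_n(n)}2^{(n-k)\beta}\cdot 2^{(n-k)(d-1)}=2^{o_n(n)}2^{(n-k)(\beta+d-1)}$; crucially this already accounts for \emph{all} indices $\I\in\iI_k$ regardless of how many there are, so no factor of $|\iI_k|$ enters $\Nn(G^k_C)$. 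Feeding this into \eqref{splitplan} with the base-point bound $2^{n\alpha}$ yields \eqref{3}. The main obstacle I anticipate is making this rotational sweeping estimate rigorous: one must argue that the $o_n(n)$ error genuinely absorbs the overlaps between different $T_\I(C)$ and that the $(d-1)$ exponent (rather than $d$) is correct because the radial direction is already spent covering $C$ itself; handling the degenerate case where $C$ has small dimension, and keeping the $o_n(n)$ and $o_k(1)$ error terms uniform, will require the most care, whereas the sumset step \eqref{splitplan} and the index-counting inputs \eqref{simineq}, \eqref{boxdim} are routine.
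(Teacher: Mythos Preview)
Your treatment of \eqref{1} and \eqref{4} is correct and matches the paper. However, there are genuine gaps in your arguments for \eqref{2} and \eqref{3}.

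For \eqref{2}, the claim $|\iI_k|\le 2^{(\alpha+o)k}$ is simply false. There is no reason for the number of level-$k$ maps (even counted modulo exact coincidence, or modulo translation) to be governed by $\dim_\text{B} F_\emptyset$. The IFS in Section~\ref{example} has $F_\emptyset$ a single point, so $\alpha=0$, yet $|\iI_k|\asymp 2^{sk}$ with $s$ arbitrarily large; your bound would force $|\iI_k|=2^{o(k)}$. The paper's proof is completely different and does not count maps at all: since $C\subset B(F_\emptyset,r)$ for some $r$, every $S_\I(C)$ with $\I\in\iI_k$ lies in the $r2^{-k}$-neighbourhood of $F_\emptyset$, and one then covers this neighbourhood using that $F_\emptyset$ meets $2^{k\alpha+o(k)}$ cubes of side $2^{-k}$, each refined into $2^{(n-k)d}$ sub-cubes.

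For \eqref{3}, your sumset decomposition and the base-point bound $\Nn(\{S_\I(0)\})\le 2^{n\alpha+o(n)}$ are fine, but the ``rotational sweeping'' bound on $\Nn(G^k_C)$ is incomplete. You treat $G^k_C$ as the orbit of \emph{a single scaled copy} of $C$ under all rotations, which gives $c\,|C|\,S^{d-1}$ for one fixed $c$ and indeed has dimension $\le\beta+d-1$. But the maps $T_\I$ for $\I\in\iI_k$ have \emph{different} ratios $c_\I\in(2^{-k-1},2^{-k}]$, and if you simply absorb this into an interval of scales you obtain $(2^{-k-1},2^{-k}]\cdot|C|\cdot S^{d-1}$, which for $|C|=\{1\}$ is a full $d$-dimensional annulus and only yields $\Nn(G^k_C)\le 2^{(n-k)d}$. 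The paper's key observation, which you are missing, is arithmetic rather than geometric: the set of ratios $\{c_\I:\I\in\iI_k\}$ has cardinality at most polynomial in $k$ (because scalar multiplication is commutative, so each $c_\I$ is determined by the exponent vector $(n_i)_{i\in\iI}$ with $\sum n_i\lesssim k$). Hence $G^k_C$ is contained in a union of $k^{O(1)}$ scaled copies of $|C|S^{d-1}$, each contributing $2^{(n-k)(\beta+d-1)+o(n)}$, and the polynomial factor is absorbed into $2^{o_n(n)}$.
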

\begin{proof}[Proof of \eqref{1}]

For $\I\in\iI_k$ we have
$$\Nn(S_\I(C)) = \Nnk(2^k\cdot S_\I(C)) \le \Nnk(C).$$
Using \eqref{simineq},
\begin{align*}
\Nn(F^k_C) & \le |\iI_k| \Nnk(C) \le 2^{sk+o_k(k)} \Nnk(C) \\
& \le 2^{sk+o_k(k)} 2^{(n-k)\beta+o_{n-k}(n-k)} \le 2^{o_n(n)} 2^{ks + (n-k)\beta}. \qedhere
\end{align*}
\end{proof}

\begin{proof}[Proof of \eqref{2}.]
Let $B(X, r)$ stand for the $r$-neighbourhood of a set $X$. Assuming $C\subset B(F_\emptyset,r)$, we clearly have $F^k_C\subset B(F_\emptyset, r 2^{-k})$. As $F_\emptyset$ intersects at most $2^{k\alpha+o_k(k)}$ grid cubes of side-length $2^{-k}$, the $r 2^{-k}$-neighbourhood of $F_\emptyset$ intersects  at most $2^{k\alpha+o_k(k)}$ grid cubes of side-length $2^{-k}$ also.  Splitting each of these into $2^{(n-k)d}$ cubes of side-length $2^{-n}$ in the obvious way and replacing $2^{o_k(k)}$ with $2^{o_n(n)}$ yields the result.
\end{proof}

\begin{proof}[Proof of \eqref{3}.]
For simplicity assume that $0\in F_\emptyset$. Then $S_\I(0)\in F_\emptyset$ for all $\I$.

For $\I\in \iI_k$,
$$S_\I(C) = S_\I(0)+T_\I(C) \subset F_\emptyset+ T_\I(C) \subset F_\emptyset + c_\I |C| S^{d-1}$$
where $|C|=\{\norm{x}:x\in C\}\subset \R$ and $S^{d-1}=\{x\in\R^d: \norm{x}=1\}$. Therefore
$$F^k_C \subset F_\emptyset+ \{c_\I: \I\in\iI_k\} |C| S^{d-1},$$
and by Lemma~\ref{sum},
\begin{align}
\Nn(F^k_C) & \le 2^d \Nn(F_\emptyset) \cdot \Nn(\{c_\I: \I\in\iI_k\} |C| S^{d-1}) \nonumber \\
& \le 2^d \Nn(F_\emptyset) \cdot \Nnk(\{ 2^k c_\I: \I\in\iI_k\} |C| S^{d-1}) \nonumber \\
& \le 2^d \cdot \Nn(F_\emptyset) \cdot |\{2^k c_\I: \I\in\iI_k\}|  \cdot \Nnk(|C| S^{d-1}) \label{31}
\end{align}
where in the last step we used that $2^k c_\I\le 1$ for $\I \in \iI_k$.
Since multiplication of scalars is commutative,
$|\{2^k c_\I : \I\in\iI_k\}|$
is polynomial in $k$, bounded by $k^D$ for some $D \in \mathbb{N}$ say. The upper box dimension of $|C|S^{d-1}$ is at most $\beta+d-1$. Therefore \eqref{31} implies
\begin{align*}
\Nn(F^k_C) & \le 2^d  2^{n\alpha+o_n(n)} k^D 2^{(n-k)(\beta+d-1)+o_{n-k}(n-k)} \\
& \le  2^{o_n(n)} 2^{n\alpha + (n-k)(\beta+d-1)} \qedhere.
\end{align*}
\end{proof}

\begin{proof}[Proof of \eqref{4}.]
For $\I\in \iI_k$,
$$S_\I(C) = S_\I(0)+T_\I(C) \subset F_\emptyset+ T_\I(C),$$
so
$$F^k_C\subset F_\emptyset + \bigcup_{\I\in\iI_k} T_\I(C).$$
We again have $\Nn(F_{\emptyset})\le 2^{n\alpha + o_n(n)}$, and
$$\Nn(\cup_{\I\in\iI_k} T_\I(C)) \le |\{T_\I : \I\in \iI_k\}| \Nn(T_\I(C)) \le 2^{k\gamma + o_k(k)} 2^{(n-k)\beta+o_{n-k}(n-k)}.$$
So by Lemma~\ref{sum},
\[
\Nn(F^k_C) \le 2^d 2^{n\alpha + o_n(n)} 2^{k\gamma + o_k(k)} 2^{(n-k)\beta+o_{n-k}(n-k)} \le 2^{o_n(n)} 2^{n\alpha+k\gamma+(n-k)\beta}.\qedhere
\]
\end{proof}

Now we state and prove the main result of this Section.

\begin{theorem}\label{thm1}
Let $F_C\subset \R^d$ be an inhomogeneous self-similar set and let $\alpha, \beta, \gamma$ be as above. Then
\begin{align*}
\overline{\dim}_\text{\emph{B}} F_C \le \max_{0\le x\le 1} \ \min \Big\{
& xs+(1-x)\beta,\\
& x\alpha+(1-x)d,\\
& \alpha+(1-x)(\beta+d-1),\\
& \alpha+x\gamma+(1-x)\beta
\Big\}.
\end{align*}

\end{theorem}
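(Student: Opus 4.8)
The plan is to bound $\Nn(F_C)$ directly at each dyadic scale and then pass to the limit, feeding the four estimates of Proposition~\ref{prop1} into the decomposition of the orbital set. Recall first that the upper box dimension may be computed along the geometric sequence $\delta=2^{-n}$, since $N_\delta(F_C)\asymp \Nn(F_C)$ and $-\log\delta\asymp n$ for $2^{-n-1}\le\delta<2^{-n}$; so it suffices to control $\Nn(F_C)$ as $n\to\infty$. Using $F_C=F_\emptyset\cup C\cup\bigcup_{k\ge 0}F^k_C$, where $\bigcup_{k\ge 0}F^k_C=\bigcup_{\I\in\iI^*}S_\I(C)$ exhausts the nontrivial part of the orbital set, I would split the sum over $k$ at $k=n$ and write
\[
\Nn(F_C)\le \Nn(F_\emptyset)+\Nn(C)+\sum_{k=0}^{n-1}\Nn(F^k_C)+\Nn\Big(\bigcup_{k\ge n}F^k_C\Big).
\]

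The three \emph{trivial} terms are handled first. Fixing a point $p_0\in F_\emptyset$ we have $S_\I(p_0)\in F_\emptyset$ and $\operatorname{diam} S_\I(C)\le c_\I\operatorname{diam}(C\cup\{p_0\})\le R\,c_\I$ for a constant $R$ depending only on $C$ and $F_\emptyset$; hence for $\I\in\iI_k$ with $k\ge n$ we have $c_\I\le 2^{-n}$ and $S_\I(C)\subset B(F_\emptyset,R\,2^{-n})$, so the tail satisfies $\Nn(\bigcup_{k\ge n}F^k_C)\lesssim \Nn(F_\emptyset)\le 2^{n\alpha+o_n(n)}$. Together with $\Nn(F_\emptyset)\le 2^{n\alpha+o_n(n)}$ and $\Nn(C)\le 2^{n\beta+o_n(n)}$ from \eqref{boxdim}, these three contributions are bounded by $2^{n\max\{\alpha,\beta\}+o_n(n)}$.

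For the main range $0\le k<n$, put $x=k/n\in[0,1]$; each bound of Proposition~\ref{prop1} reads $\Nn(F^k_C)\le 2^{o_n(n)}2^{n\ell_i(x)}$ for the linear functions $\ell_1(x)=xs+(1-x)\beta$, $\ell_2(x)=x\alpha+(1-x)d$, $\ell_3(x)=\alpha+(1-x)(\beta+d-1)$, $\ell_4(x)=\alpha+x\gamma+(1-x)\beta$. Taking the minimum and summing the $n$ terms (the factor $n$ is absorbed into $2^{o_n(n)}$) gives, with $m(x)=\min_{1\le i\le 4}\ell_i(x)$ and $M=\max_{0\le x\le 1}m(x)$,
\[
\sum_{k=0}^{n-1}\Nn(F^k_C)\le 2^{o_n(n)}\max_{0\le k<n}2^{n\,m(k/n)}\le 2^{o_n(n)}2^{nM},
\]
using that the discrete maximum over $k/n$ is dominated by the continuous one over $[0,1]$. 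To absorb the trivial terms I would check $M\ge\max\{\alpha,\beta\}$ at the endpoints: $m(1)=\alpha$ (since $\alpha\le s$ and $\gamma\ge 0$) and $m(0)=\beta$ (since $\beta\le d$). Assembling everything yields $\Nn(F_C)\le 2^{o_n(n)}2^{nM}$, whence $\overline{\dim}_\text{B}F_C\le M$, which is the claimed formula.

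The only genuinely delicate point is the tail estimate: one must verify that the infinitely many tiny images $S_\I(C)$ with $c_\I\le 2^{-n}$ cluster inside the $2^{-n}$-neighbourhood of $F_\emptyset$ and therefore cost no more than $\Nn(F_\emptyset)$. Everything else is bookkeeping with the $o_n(n)$ error terms, the reduction to dyadic scales, and the elementary replacement of the discrete maximum over $k/n$ by the maximum of the piecewise-linear function $m$ over $[0,1]$.
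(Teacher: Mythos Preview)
Your argument is correct and follows essentially the same route as the paper: decompose $F_C$ into the layers $F^k_C$ for $0\le k<n$ together with a tail contained in a $2^{-n}$-neighbourhood of $F_\emptyset$, apply Proposition~\ref{prop1} to each layer, and pass to the limit with $x=k/n$. The only differences are bookkeeping---the paper absorbs $\Nn(F_\emptyset)$ via $\Nn(F_\emptyset)\le O(1)\Nn(F^{n-1}_C)$ rather than your endpoint check $m(1)=\alpha$, and you are in fact slightly more careful by explicitly accounting for the term $\Nn(C)$, which the paper's inclusion $F_C\subset B(F_\emptyset,r2^{-n})\cup\bigcup_{k=0}^{n-1}F^k_C$ tacitly omits (harmlessly, since $m(0)=\beta$).
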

\begin{proof}
For some constant $r$, we have $F_C\subset B(F_\emptyset, r 2^{-n}) \cup \left(\cup_{k=0}^{n-1} F^k_C\right)$. Therefore
 $$\Nn(F_C) \le O(1) \Nn(F_\emptyset) + \sum_{k=0}^{n-1} \Nn(F^k_C) \le O(1) \Nn(F_{\emptyset})+n\max_{k\in\{0,\ldots, n-1\}}\Nn(F^k_C).$$
Since $\Nn(F_\emptyset)\le O(1) \Nn(F^{n-1}_C)$, we obtain
$$\frac{\log \Nn(F_C)}{\log 2^n}\le {\max_{k\in\{0,\ldots, n-1\}}}\frac{\log \Nn(F^k_C)}{\log 2^n}+o_n(1).$$
Using Proposition~\ref{prop1}, the obvious substitution $x=k/n,$ and taking the limit, we conclude the proof.
\end{proof}
We will see that Theorem~\ref{thm1} is sharp in many cases.

\subsection{Corollaries of Theorem~\ref{thm1}}
An immediate corollary is the following.

\begin{cor}\label{cor1} \
\begin{itemize}
\item If $\max(\alpha,\beta)<s$ then $\overline{\dim}_\text{\emph{B}} F_C < s$.
\item If $\max(\alpha,\beta)<d$ then $\overline{\dim}_\text{\emph{B}} F_C< d$.
\end{itemize}
\end{cor}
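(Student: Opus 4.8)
The plan is to read the corollary directly off Theorem~\ref{thm1} using only the first two of the four competing bounds. Write the right-hand side of Theorem~\ref{thm1} as $\max_{0\le x\le 1} g(x)$, where $g(x)=\min\{f_1(x),f_2(x),f_3(x),f_4(x)\}$ and in particular
$$f_1(x)=xs+(1-x)\beta, \qquad f_2(x)=x\alpha+(1-x)d.$$
Since $g\le \min\{f_1,f_2\}$ pointwise, it suffices to control the smaller quantity $\min\{f_1,f_2\}$, and the remaining two bounds $f_3,f_4$ play no role here.

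For the first bullet, assume $\max(\alpha,\beta)<s$. First I would observe that $f_1$ is the affine interpolation between $f_1(0)=\beta$ and $f_1(1)=s$; since $\beta<s$ this is strictly increasing, so $f_1(x)<s$ for every $x\in[0,1)$, while at the single remaining point $x=1$ I instead use $f_2(1)=\alpha<s$. Hence $\min\{f_1,f_2\}(x)<s$ for every $x\in[0,1]$, and so $g(x)<s$ throughout. For the second bullet, assume $\max(\alpha,\beta)<d$; here the roles of the two functions are swapped. Since $\alpha<d$, the function $f_2(x)=d-x(d-\alpha)$ is strictly decreasing, so $f_2(x)<d$ for every $x\in(0,1]$, and at the remaining point $x=0$ I use $f_1(0)=\beta<d$. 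Thus $\min\{f_1,f_2\}(x)<d$ for all $x$, and again $g(x)<d$ throughout.

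The one place where strictness is not automatic — and the point I expect to be the only real obstacle — is upgrading the pointwise strict inequalities $g(x)<s$ (resp. $g(x)<d$) to a strict inequality for the maximum over $[0,1]$. This is where I would invoke compactness: $g$ is a minimum of finitely many affine functions, hence continuous on the compact interval $[0,1]$, so its maximum is attained at some $x_0\in[0,1]$; the pointwise bound at $x_0$ then gives $\max_{0\le x\le 1} g(x)=g(x_0)<s$ (resp. $<d$), which by Theorem~\ref{thm1} is exactly $\overline{\dim}_\text{B} F_C<s$ (resp. $<d$). The only subtlety in the execution is arranging the endpoint cases so that $f_1$ and $f_2$ between them cover the whole interval with strict inequalities, which the two computations above accomplish.
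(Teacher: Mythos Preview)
Your argument is correct and follows exactly the approach the paper indicates (``clear by using the first two terms inside the minimum in Theorem~\ref{thm1}''), merely filling in the details: you use only $f_1$ and $f_2$, handle the endpoints $x=0,1$ separately so that strict inequality holds pointwise, and then invoke continuity/compactness to pass to the maximum. There is nothing to add.
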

\begin{proof}
These are clear by using the first two terms inside the minimum in Theorem~\ref{thm1}.
\end{proof}


\begin{cor}\label{cor2}
Assume that $\gamma=0$ (which is the case whenever the orthogonal matrices are commuting, or $d=1$ or $d=2$). Then
$$\overline{\dim}_\text{\emph{B}} F_C \le \max\{\beta, \ \alpha+\beta-\alpha\beta/s\}$$
(where $\alpha=\dim_\text{\emph{H}} F_\emptyset$ and $\beta=\overline{\dim}_\text{\emph{B}}\,C$).
\end{cor}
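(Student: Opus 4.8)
The plan is to apply Theorem~\ref{thm1} with $\gamma = 0$ and reduce the four-way minimum to a tractable optimisation. Setting $\gamma=0$ makes the fourth term $\alpha + x\gamma + (1-x)\beta = \alpha + (1-x)\beta$, and the third term $\alpha + (1-x)(\beta+d-1)$ is then pointwise at least as large as the fourth for $x\in[0,1]$ (since $d-1\ge 0$), so the third term is redundant in the minimum. Thus I would first argue that
\[
\overline{\dim}_\text{B} F_C \le \max_{0\le x\le 1} \min\big\{\, xs+(1-x)\beta,\ x\alpha+(1-x)d,\ \alpha+(1-x)\beta \,\big\}.
\]
The second term $x\alpha + (1-x)d$ is likewise never the binding constraint at the maximiser when $\alpha \le s$ and $\alpha\le d$: since $\alpha+(1-x)\beta$ only involves $\alpha$ and $\beta$, and for the relevant range the first and third terms govern the maximum, I would verify the second term can be dropped or that it exceeds the value claimed.

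The core computation is then $\displaystyle\max_{0\le x\le 1}\min\{ xs+(1-x)\beta,\ \alpha+(1-x)\beta\}$. I would treat this as two linear functions of $x$ on $[0,1]$. The first, $g_1(x)=xs+(1-x)\beta = \beta + x(s-\beta)$, is increasing in $x$ (as $s\ge \alpha$ and typically $s\ge\beta$ in the interesting regime). The second, $g_2(x)=\alpha+(1-x)\beta = \alpha+\beta - x\beta$, is decreasing in $x$. The minimum of an increasing and a decreasing linear function is maximised where they cross, so I would solve $g_1(x)=g_2(x)$, namely $\beta + x(s-\beta) = \alpha+\beta-x\beta$, giving $x(s-\beta)+x\beta = \alpha$, i.e. $xs = \alpha$, so the crossing point is $x^\star = \alpha/s$. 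Substituting back, the common value is $g_2(\alpha/s) = \alpha + \beta - (\alpha/s)\beta = \alpha + \beta - \alpha\beta/s$, which is exactly the second term in the claimed maximum.

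It remains to handle boundary effects: if $x^\star = \alpha/s$ lies outside $[0,1]$, or if one of the dropped terms becomes binding, the maximiser sits at an endpoint. At $x=0$ the value is $\min\{\beta,\ d,\ \alpha+\beta\}=\beta$ (using $\beta\le d$), which accounts for the $\beta$ appearing in the outer maximum; this covers the degenerate case where the condensation set dominates. I would check $\alpha/s\le 1$ always holds since $\alpha\le s$, and confirm that at $x^\star$ the dropped terms ($x\alpha+(1-x)d$ and the redundant third term) are indeed $\ge \alpha+\beta-\alpha\beta/s$, so that the two-term optimisation genuinely computes the full four-term maximum. The main obstacle I anticipate is this bookkeeping — verifying rigorously that the two discarded constraints are slack at the optimum across the full parameter range of $\alpha,\beta,s,d$ — rather than the crossing-point calculation itself, which is routine. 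Assembling these cases yields $\overline{\dim}_\text{B} F_C \le \max\{\beta,\ \alpha+\beta-\alpha\beta/s\}$.
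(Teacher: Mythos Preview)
Your approach is essentially the same as the paper's: both isolate the first and fourth terms from Theorem~\ref{thm1} (with $\gamma=0$ the fourth becomes $\alpha+(1-x)\beta$), solve for the crossing point $x^\star=\alpha/s$, and read off the value $\alpha+\beta-\alpha\beta/s$, with the endpoint $x=0$ supplying the $\beta$ alternative.

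One remark: the bookkeeping you flag as the ``main obstacle'' is in fact a non-issue. Since $\min$ over a subset of terms is always at least $\min$ over the full set, dropping the second and third terms can only \emph{increase} the right-hand side, so
\[
\max_{0\le x\le 1}\min\{g_1,g_2,g_3,g_4\}\ \le\ \max_{0\le x\le 1}\min\{g_1,g_4\}
\]
automatically. You never need to verify that the discarded constraints are slack at the optimum; the two-term optimisation is already a valid upper bound regardless. The paper exploits this and simply works with the first and fourth estimates from the start.
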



\begin{proof}[Proof of Corollary~\ref{cor2}.]
We will use the first and the fourth estimate of Theorem~\ref{thm1}.
Let $f(x)=xs+(1-x)\beta$ and $g(x)=\alpha+(1-x)\beta$. Both function are monotone.
We have $f(x)=g(x)$ if $x=\alpha/s$ and at this point, the common value is $\alpha+(1-\alpha/s)\beta=\alpha+\beta-\alpha\beta/s$. We also have $f(0)=\beta$ and $g(1)=\alpha$. The maximum of these three values gives the claimed upper bound.
\end{proof}

\begin{remark} \label{remarksharp}
Corollary~\ref{cor2} shows that the box dimension of the inhomogeneous self-similar set of Theorem~\ref{thmbernoulli} is as large as possible, as it is exactly
$\alpha+\beta-\alpha \beta/s= 2-1/s$. In other words, Corollary~\ref{cor2} is sharp in some sense.
\end{remark}

\erase{
\begin{theorem}\label{thm4}
Let $F_C$ be any inhomogeneous self-similar set in $\R^d$. Then
$$\dimfc \le \max\left(\beta, \ \frac{sd-\alpha\beta}{s+d-\alpha-\beta}\right)
= \max\left(\beta, \  s-\frac{(s-\alpha)(s-\beta)}{s+d-\alpha-\beta}\right)
$$
and
$$\dimfc \le \max\left(\beta, \ \frac{\alpha+\beta-\alpha\beta/s+(d-1)}{1+(d-1)/s}\right).$$
(The second bound is better if and only if $\alpha+\beta<1$.)
\end{theorem}
\begin{proof}
Let $d'$ be either $d$ or $\alpha+\beta+d-1$ (the smaller the better the bound).
Let $$f(x)=xs+(1-x)\beta$$ and $$g(x)=x\alpha+(1-x)d'.$$ Then $\dimfc \le \max_x \min(f(x),g(x))$ by Theorem~\ref{thm1}.
We have $f(x)=g(x)$ if
$x(s-\beta-\alpha+d')=d'-\beta$. At this point,
$$f(x)=g(x)=\frac{(d'-\beta)\alpha+(s-\beta-\alpha+d'-(d'-\beta))d'}{s-\beta-\alpha+d'}=\frac{sd'-\beta\alpha}{s-\beta-\alpha+d'}.$$

We also have $f(0)=\beta$, $g(1)=\alpha$.
Therefore the claim follows.
\end{proof}
}

If $C$ has box dimension zero then we have the following corollary.
\begin{cor}\label{cor3}
Assume that  $\beta=0$ (for example, $C$ consists of a single point). Then
\begin{align*}\overline{\dim}_\text{\emph{B}} F_C & = \alpha = \dim_\text{\emph{H}} F_\emptyset \qquad \text{ if } d=1,2 \text{ or }\gamma=0;\\
\overline{\dim}_\text{\emph{B}} F_C & \le \frac{s}{1+(s-\alpha)/d'} = \frac{d'}{1+(d'-\alpha)/s} \qquad \text{ if } d\ge 3,
\end{align*}
where $d'=\min(d, \ \alpha+d-1)$.
\end{cor}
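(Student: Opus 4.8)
The plan is to specialize Theorem~\ref{thm1} to the case $\beta=0$ and then carry out the resulting elementary one-variable optimization, using the general lower bound \eqref{bounds} to upgrade the inequality to an equality in the first case. With $\beta=0$ the four affine expressions inside the minimum in Theorem~\ref{thm1} become
$$f_1(x)=xs,\quad f_2(x)=x\alpha+(1-x)d,\quad f_3(x)=\alpha+(1-x)(d-1),\quad f_4(x)=\alpha+x\gamma,$$
so the task is to estimate $\max_{0\le x\le 1}\min\{f_1,f_2,f_3,f_4\}$.

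For the first case ($d=1,2$ or $\gamma=0$), I would note that $d\in\{1,2\}$ already forces $\gamma=0$ by the lemma preceding Theorem~\ref{thm1}, so $f_4(x)\equiv\alpha$ throughout. Hence the inner minimum never exceeds $\alpha$, and Theorem~\ref{thm1} gives $\overline{\dim}_\text{B}F_C\le\alpha$. Combined with the lower bound $\overline{\dim}_\text{B}F_C\ge\overline{\dim}_\text{B}F_\emptyset=\alpha$ from \eqref{bounds} (here $\alpha=\dim_\text{H}F_\emptyset=\dim_\text{B}F_\emptyset$), this yields the claimed equality $\overline{\dim}_\text{B}F_C=\alpha$.

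For the second case ($d\ge3$) I would simply drop $f_4$ and bound the inner minimum by $\min\{f_1,f_2,f_3\}$. The key simplification is that $f_2$ and $f_3$ are both convex combinations of $\alpha$ and a constant, namely $f_2(x)=x\alpha+(1-x)d$ and $f_3(x)=x\alpha+(1-x)(\alpha+d-1)$; since $1-x\ge0$ on $[0,1]$ this collapses them into the single decreasing affine function
$$\min\{f_2(x),f_3(x)\}=x\alpha+(1-x)d',\qquad d'=\min(d,\alpha+d-1).$$
It then remains to maximize $\min\{f_1,\,x\alpha+(1-x)d'\}$, the minimum of the increasing function $f_1$ and this decreasing one. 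The maximum is attained at their unique crossing point $x^\ast=d'/(s+d'-\alpha)$, and the common value there is $sd'/(s+d'-\alpha)=s/(1+(s-\alpha)/d')$, the asserted bound.

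I do not expect a genuine obstacle: the corollary is a short optimization once Theorem~\ref{thm1} is available. The only steps needing care are the algebraic identity $f_3(x)=x\alpha+(1-x)(\alpha+d-1)$ that merges $f_2$ and $f_3$ into one line, verifying $d'\ge\alpha$ so that this merged line is indeed decreasing, and checking $x^\ast\in[0,1]$ — all of which hinge on the standing inequalities $\alpha\le s$ and $\alpha\le d$.
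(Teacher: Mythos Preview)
Your proposal is correct and follows essentially the same route as the paper: for the first case the paper invokes Corollary~\ref{cor2} (which, with $\beta=0$, reduces to the same use of the fourth term you employ), and for $d\ge 3$ the paper likewise drops the fourth term, rewrites $\min\{f_2,f_3\}=x\alpha+(1-x)d'$, and computes the crossing with $f_1$. The only cosmetic difference is that you appeal directly to the fourth estimate in Theorem~\ref{thm1} rather than via Corollary~\ref{cor2}.
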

\begin{proof}
If $d=1,2$ or $\gamma=0$ then this is immediate from Corollary~\ref{cor2}.

Using the first three terms in Theorem~\ref{thm1} with $\beta=0$ we obtain
$$\overline{\dim}_\text{B} F_C \le \max_{0\le x\le 1} \min \Big(xs, \ x\alpha+(1-x)d, \ \alpha+(1-x)(d-1)\Big)=\max_{0\le x\le 1} \min \Big(xs, \ x\alpha+(1-x)d'\Big).$$
Calculating this maximum gives the required result.
\end{proof}

\begin{cor}\label{cor4}
Let $d \geq 2$ and assume that all similarity maps share a common fixed point, that is, $F_\emptyset$ is a singleton and $\alpha=0$. If $C$ is a singleton (or any compact set of box dimension $0$), we have
$$\overline{\dim}_\text{\emph{B}} F_C \le \frac{s}{1+\frac{s}{d-1}}=\frac{d-1}{1+\frac{d-1}{s}}<d-1.$$
\end{cor}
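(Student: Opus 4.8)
The plan is to deduce this directly from Corollary~\ref{cor3}, of which it is essentially the specialisation to the case $\alpha=0$. First I would record the two inputs the hypotheses supply. Since $F_\emptyset$ is a single point, its Hausdorff (equivalently box) dimension vanishes, so $\alpha=0$; and since $C$ is a singleton, or more generally a compact set of box dimension zero, we have $\beta=\overline{\dim}_\text{B}\,C=0$. Thus both standing assumptions of Corollary~\ref{cor3} (namely $\beta=0$) are in force, and it remains only to evaluate the bound it provides at $\alpha=0$.

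For $d\ge 3$ I would invoke the second case of Corollary~\ref{cor3}, which gives
$$\overline{\dim}_\text{B} F_C \le \frac{s}{1+(s-\alpha)/d'}, \qquad d'=\min(d,\ \alpha+d-1).$$
Substituting $\alpha=0$ collapses $d'$ to $\min(d,d-1)=d-1$, so the bound reads $s/(1+s/(d-1))$, matching the first displayed expression in the statement. The rest is purely algebraic bookkeeping: clearing the inner fraction rewrites this as $s(d-1)/(s+d-1)$, which symmetrically re-expands to $(d-1)/(1+(d-1)/s)$, giving the second displayed form. The final strict inequality $<d-1$ is then immediate, since $s>0$ and $d-1\ge 1$ force the denominator $1+(d-1)/s$ to exceed $1$.

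The only genuine care needed is the boundary value $d=2$, where the second branch of Corollary~\ref{cor3} is not available. Here I would instead appeal to its first branch (valid for $d=1,2$), which yields $\overline{\dim}_\text{B} F_C=\alpha=0$; since the right-hand side $s/(1+s)$ is strictly positive, the asserted inequality holds a fortiori. I do not expect any step to present a real obstacle—all the substantive content is already packaged in Theorem~\ref{thm1} and Corollary~\ref{cor3}—so the main thing to get right is to keep the two regimes $d=2$ and $d\ge 3$ separate and to verify that the algebraic identity between the two displayed forms, together with the concluding strict inequality, are stated correctly.
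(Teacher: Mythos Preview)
Your proposal is correct and follows exactly the approach of the paper, which simply states ``Immediate from Corollary~\ref{cor3}.'' You have spelled out the substitution $\alpha=0$, the resulting value $d'=d-1$, the algebraic identity between the two displayed forms, and the separate treatment of $d=2$ via the first branch of Corollary~\ref{cor3}; these are precisely the details one would fill in to justify that word ``immediate.''
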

\begin{proof}
Immediate from Corollary~\ref{cor3}.
\end{proof}

\begin{remark}
The $d-1$ bound in Corollary~\ref{cor4} is sharp by Theorem~\ref{sgconseq}.
\end{remark}

\section{The weak separation condition case} \label{WSPsection}

Our next result provides a simple sharpening of (\ref{bounds}), which will allow us to extend the class of inhomogeneous self-similar sets for which we know that (\ref{expected}) holds.  In particular, the separation property required in \cite{fraser1} can be significantly weakened to the \emph{weak separation property}.

For $\I= (i_1, i_2, \dots, i_k ) \in \mathcal{I}^*$, let $\I^\dagger = (i_1, i_2, \dots, i_{k-1} )$ and for $r \in (0,1)$, let
\[
\mathcal{I}(r)  = \{ \I \in \mathcal{I}^*  \ :  \ c_{\I} \leq r < c_{\I^\dagger} \}
\]
be the set of finite strings whose corresponding contraction ratio is approximately $r$.  For convenience we assume the map corresponding to the empty word is the identity with ratio 1.  Let $\sim$ be the relation on $\mathcal{I}^*$ defined by $\I \sim \J$ if $S_{\I} = S_{\J}$.  Thus
\[
\mathcal{I}(r) / \sim
\]
is the set of finite strings whose corresponding similarity maps are distinct and have contraction ratio approximately $r$. In a slight abuse of notation we identify equivalence classes with an arbitrarily chosen representative from the class.  Note that $F_\emptyset$ is the attractor of $\{S_\I\}_{\I \in  \mathcal{I}(r) / \sim}$ for all $r$.  Finally let $\alpha(r)$ be the similarity dimension for this reduced IFS, i.e. the unique solution of
\[
\sum_{\I \in  \mathcal{I}(r) / \sim} c_\I^{\alpha(r)} = 1.
\]
It is straightforward to see that $\alpha(r)$ decreases as $r$ decreases and so we define the \emph{modified similarity dimension} as
\[
s^* \ = \ \lim_{r \to 0} \alpha(r) \ = \ \inf_{r \in (0,1)} \alpha(r)
\]
and note that it is an upper bound for the upper box dimension of $F_\emptyset$. The next theorem is a sharpening of \cite[Theorem 2.1]{fraser1}.

\begin{thm} \label{thmWSP}
If $F_C$ is an inhomogeneous self-similar set, then
\[
\max \{ \overline{\dim}_\text{\emph{B}}  F_\emptyset, \ \overline{\dim}_\text{\emph{B}} C\} \ \leq \ \overline{\dim}_\text{\emph{B}} F_C \ \leq \ \max \{ s^*, \ \overline{\dim}_\text{\emph{B}} C\}.
\]
\end{thm}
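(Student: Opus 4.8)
The plan is to prove the two inequalities separately. The lower bound $\max\{\overline{\dim}_{\text{B}} F_\emptyset, \overline{\dim}_{\text{B}} C\} \le \overline{\dim}_{\text{B}} F_C$ is immediate from the structural fact recorded in the introduction: since $F_C = F_\emptyset \cup \mathcal{O}$ and $C \subseteq \mathcal{O} \subseteq F_C$, both $F_\emptyset$ and $C$ are subsets of $F_C$, and upper box dimension is monotone under inclusion. So all the work is in the upper bound $\overline{\dim}_{\text{B}} F_C \le \max\{s^*, \overline{\dim}_{\text{B}} C\}$, and the point of introducing $s^*$ rather than $s$ is precisely that the reduced IFS $\{S_\I\}_{\I \in \mathcal{I}(r)/\sim}$ discards the redundant strings produced by overlaps, so $s^*$ can be strictly smaller than $s$ when the weak separation property holds.

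For the upper bound I would estimate $N_\delta(F_C)$ directly, mirroring the decomposition already used in the proof of Theorem~\ref{thm1} but with the crucial refinement that images $S_\I(C)$ are counted only up to the equivalence $\sim$. First I would fix a small $r \in (0,1)$ and stratify the orbital set $\mathcal{O} = C \cup \bigcup_{\I} S_\I(C)$ according to the generation defined by $\mathcal{I}(r)$: every $S_\I$ can be written as a composition of blocks each of which lies in $\mathcal{I}(r)/\sim$, so at ``scale $r^m$'' the number of \emph{distinct} maps $S_\I$ with $c_\I \asymp r^m$ is controlled, for large $m$, by roughly $r^{-m\,\alpha(r)}$ (using that $\alpha(r)$ is the similarity dimension of the reduced IFS and that distinct composed blocks give distinct maps). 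Combining this with the covering estimate $N_\delta(S_\I(C)) \le N_{\delta/c_\I}(C) \le 2^{(\beta+o(1))\log(c_\I/\delta)/\log 2}$ (where $\beta = \overline{\dim}_{\text{B}} C$), and summing over generations exactly as in the two competing sums of the proof of Theorem~\ref{thmbernoulli}, one obtains $N_\delta(F_C) \lesssim \delta^{-\max\{\alpha(r),\beta\} - o(1)}$. Taking $-\log N_\delta / \log \delta$, letting $\delta \to 0$, and then letting $r \to 0$ so that $\alpha(r) \to s^*$ yields the claimed bound.

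The key structural input I would isolate as a preliminary step is that passing to $\mathcal{I}(r)/\sim$ genuinely reduces the multiplicity count: the weak separation property is exactly what guarantees that the number of distinct maps $\{S_\I : c_\I \asymp r^m\}$ grows at the rate governed by $\alpha(r)$ rather than the full $s$, and that nearby images $S_\I(C)$ coming from coincident maps are literally identical (hence counted once) rather than merely close. I would make this precise by showing that any $\I$ with $c_\I \le r^m < c_{\I^\dagger}$ factors, up to $\sim$, through a concatenation of $m$ elements of $\mathcal{I}(r)/\sim$, so that the number of distinct similarity maps at that scale is at most $(\#(\mathcal{I}(r)/\sim))^m$, which is comparable to $r^{-m\alpha(r)}$ by the defining equation $\sum_{\I \in \mathcal{I}(r)/\sim} c_\I^{\alpha(r)} = 1$.

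The main obstacle I anticipate is the bookkeeping in this factorisation step: strings in $\mathcal{I}(r)$ do not have a single common length, the contraction ratios are only \emph{approximately} $r$, and one must verify that splitting a long string into reduced-IFS blocks is compatible with the equivalence relation $\sim$ (i.e. that coincidences of composite maps are accounted for without either over- or under-counting). Handling the boundary generations near scale $\delta$ and converting the per-generation bounds into a clean $o(\log 1/\delta)$ error — so that the subexponential factors and the polynomial-in-$m$ number of generations are absorbed — is routine but is where care is needed; everything else follows the template of the earlier box-counting arguments in the paper.
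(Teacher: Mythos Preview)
Your approach is workable but considerably more laborious than the paper's. The paper does not redo any covering estimate; instead it performs a two-line reduction to the already-established bound~(\ref{bounds}). For each fixed $r\in(0,1)$ it sets $C(r)=C\cup\bigcup_{\I\in\mathcal{J}(r)}S_\I(C)$, where $\mathcal{J}(r)$ is the (finite) set of all subwords of strings in $\mathcal{I}(r)$, and observes that $F_C$ is \emph{equal} to the inhomogeneous attractor $F_{C(r)}$ of the reduced IFS $\{S_\I\}_{\I\in\mathcal{I}(r)/\sim}$ with condensation $C(r)$. Applying (\ref{bounds}) to this new system gives $\overline{\dim}_\text{B} F_C\le\max\{\alpha(r),\overline{\dim}_\text{B} C(r)\}=\max\{\alpha(r),\overline{\dim}_\text{B} C\}$, since $C(r)$ is a finite union of bi-Lipschitz images of $C$; then let $r\to 0$. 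The ``remainder blocks'' that you correctly flag as the main bookkeeping obstacle are precisely what get absorbed into $C(r)$, so the paper sidesteps the stratification and summation you describe entirely. Your route effectively re-proves (\ref{bounds}) for the reduced IFS from scratch; the paper's route simply quotes it.

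One conceptual point in your write-up is wrong and should be removed: the theorem does \emph{not} assume the weak separation property, and WSP plays no role in bounding the number of distinct maps at scale $r^m$. The estimate $\#\{S_\I:\I\in\mathcal{I}(r^m)/\sim\}\lesssim(\#(\mathcal{I}(r)/\sim))^m\asymp r^{-m\alpha(r)}$ follows from the block factorisation alone (compatibility with $\sim$ is immediate, since $S_{\I_1}=S_{\I_1'}$ and $S_{\I_2}=S_{\I_2'}$ force $S_{\I_1\I_2}=S_{\I_1'\I_2'}$), with no separation hypothesis whatsoever. WSP enters only in the subsequent Corollary~\ref{thmWSP2}, via Zerner's theorem, to identify $s^*$ with $\overline{\dim}_\text{B} F_\emptyset$. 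Your sentence ``the weak separation property is exactly what guarantees that the number of distinct maps \ldots\ grows at the rate governed by $\alpha(r)$'' is therefore mistaken; strip those references and your direct argument survives, but the paper's reduction is much cleaner.
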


\begin{proof}
 It suffices to show that $\overline{\dim}_\text{B} F_C \ \leq \ \max \{ \alpha(r), \ \overline{\dim}_\text{B} C\}$ for all $r \in (0,1)$, recalling that the lower bound is trivial. Fix $r \in (0,1)$ and let
\[
\mathcal{J}(r)  = \{ \I \in \mathcal{I}^* \ : \ \text{$\I$ is a subword of $\I'$ for some $\I' \in \mathcal{I}(r)$} \}.
\]
Let
\[
C(r) \ = \ \bigcup_{\I \in \mathcal{J}(r)} S_{\I}(C) \  \cup \  C
\]
and observe that this is a finite union of compact sets and so is itself compact and, moreover, has upper box dimension equal to that of $C$. This latter fact is due to upper box dimension being stable under taking finite unions and bi-Lipschitz images, see \cite[Chapter 3]{falconer}.  Let $F_{C(r)}$ denote the inhomogeneous attractor of the reduced IFS corresponding to $\mathcal{I}(r) / \sim$ along with the compact condensation set $C(r)$.  It follows from (\ref{bounds}) that
\[
\overline{\dim}_\text{B} F_{C(r)} \ \leq \ \max\{ \alpha(r), \, \overline{\dim}_\text{B} C(r)\} \ = \ \max\{ \alpha(r), \, \overline{\dim}_\text{B} C\}.
\]
Finally, observe that $F_{C(r)} = F_C$ because the extra copies of $C$ found in $C(r)$ precisely fill the gaps left by considering the reduced IFS.
\end{proof}

The \emph{weak separation property} is a much weaker condition than the open set condition and has proved very useful in the study of self-similar sets with overlaps.  It is satisfied if the identity map is \emph{not} an accumulation point of the set
\[
 \{ S_{\I}^{-1} \circ S_{\J} \   : \  \I, \J \in \mathcal{I}^* \}
\]
equipped with the uniform norm, see \cite{zerner}.  Zerner \cite[Theorem 2]{zerner} proved that if $F_\emptyset \subseteq \mathbb{R}^n$ is a self-similar set which does not lie in a hyperplane and the defining IFS satisfies the weak separation property, then $\overline{\dim}_\text{B}  F_\emptyset = s^*$.  This yields the following immediate corollary.

\begin{cor} \label{thmWSP2}
If $F_C$ is an inhomogeneous self-similar set such that the restriction of the underlying IFS to the smallest hyperplane containing $F_\emptyset$ satisfies the weak separation property, then
\[
\overline{\dim}_\text{\emph{B}} F_C \ = \  \max\{ \overline{\dim}_\text{\emph{B}} F_\emptyset , \, \overline{\dim}_\text{\emph{B}} C \}.
\]
\end{cor}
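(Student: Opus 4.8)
The plan is to combine the two-sided estimate of Theorem~\ref{thmWSP} with Zerner's theorem so that the upper and lower bounds collapse onto each other. The lower bound $\overline{\dim}_\text{B} F_C \ge \max\{\overline{\dim}_\text{B} F_\emptyset,\ \overline{\dim}_\text{B} C\}$ is already supplied by Theorem~\ref{thmWSP}, so the entire task is to sharpen the upper bound $\overline{\dim}_\text{B} F_C \le \max\{s^*,\ \overline{\dim}_\text{B} C\}$ to $\max\{\overline{\dim}_\text{B} F_\emptyset,\ \overline{\dim}_\text{B} C\}$. Since $s^*$ always dominates $\overline{\dim}_\text{B} F_\emptyset$, it is enough to prove the reverse inequality $s^* \le \overline{\dim}_\text{B} F_\emptyset$; in other words, under the separation hypothesis the modified similarity dimension should be exactly attained by $F_\emptyset$.

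First I would replace the ambient space by the smallest affine subspace $V$ containing $F_\emptyset$ (the ``smallest hyperplane'' of the statement). The structural point is that every $S_i$ maps $V$ onto $V$: because $S_i(F_\emptyset) \subseteq F_\emptyset \subseteq V$ and $S_i$ is an affine bijection, $S_i(V)$ is the affine hull of $S_i(F_\emptyset)$, hence is contained in $V$ and of the same dimension as $V$, forcing $S_i(V)=V$. Thus $\{S_i|_V\}$ is a bona fide IFS of similarities on $V$ whose attractor is again $F_\emptyset$, and by minimality of $V$ this attractor lies in no proper affine subspace of $V$. By assumption this restricted system enjoys the weak separation property, so Zerner's theorem \cite[Theorem~2]{zerner} applies to it and gives $\overline{\dim}_\text{B} F_\emptyset = s^*$. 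Substituting into the upper bound of Theorem~\ref{thmWSP} yields $\overline{\dim}_\text{B} F_C \le \max\{s^*,\ \overline{\dim}_\text{B} C\} = \max\{\overline{\dim}_\text{B} F_\emptyset,\ \overline{\dim}_\text{B} C\}$, which coincides with the lower bound and closes the argument.

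The one point that deserves genuine care is the identification of the two a priori different modified similarity dimensions: the $s^*$ controlling $\overline{\dim}_\text{B} F_C$ in Theorem~\ref{thmWSP} is built from the relation $\I \sim \J \iff S_\I = S_\J$ on $\R^n$, whereas the $s^*$ produced by Zerner's theorem is built from equality of the \emph{restricted} maps on $V$. Passing to $V$ can only coarsen $\sim$, since two maps agreeing on $V$ may still differ transversally, so the restricted quantity is a priori only $\le s^*$; establishing equality amounts to checking that the transverse parts of the $S_\I$ inflate the count $|\mathcal{I}(r)/\!\sim|$ by at most a sub-exponential factor. It is precisely here that the weak separation hypothesis must enter, and one should also confirm that no degenerate low-dimensional situation (for instance $\dim V = 0$) is being swept under the rug. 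By contrast, the invariance of $V$, the verification of Zerner's remaining hypotheses, and the bookkeeping with Theorem~\ref{thmWSP} are routine, which is why the result can reasonably be advertised as an immediate corollary.
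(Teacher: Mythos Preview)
Your outline matches the paper's intended argument --- apply Zerner's theorem to the IFS restricted to the affine hull $V$ of $F_\emptyset$, conclude $\overline{\dim}_\text{B} F_\emptyset = s^*$, and substitute into Theorem~\ref{thmWSP} --- and the paper adds nothing beyond calling this ``immediate''.

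The point you flag in your final paragraph is a genuine gap, and your proposed mechanism for closing it does not work. Zerner's theorem on $V$ yields $\overline{\dim}_\text{B} F_\emptyset = s^*_V$, the modified similarity dimension for the \emph{restricted} equivalence $S_\I|_V = S_\J|_V$, while Theorem~\ref{thmWSP} involves the ambient $s^*$ coming from $S_\I = S_\J$ on $\R^n$; in general only $s^*_V \le s^*$. The weak separation hypothesis cannot force equality, because it is imposed only on $V$ and says nothing about the transverse orthogonal parts $M_\I|_{V^\perp}$. The construction of Section~\ref{example} makes this concrete: there $F_\emptyset$ is a single point, so $V$ is a point, the restricted IFS is trivial and satisfies WSP vacuously, yet $s^* = s$ is large, $s^*_V = 0$, and by Theorem~\ref{sgconseq} the conclusion $\overline{\dim}_\text{B} F_C = \max\{\overline{\dim}_\text{B} F_\emptyset, \overline{\dim}_\text{B} C\} = 0$ fails. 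The corollary (and your proof of it) \emph{is} correct under the extra hypothesis that $V$ has codimension at most $2$: then $O(V^\perp)$ is virtually abelian, the number of distinct $M_\I|_{V^\perp}$ with $\I \in \mathcal{I}(r)$ grows only polynomially in $\log(1/r)$, and one obtains $s^* = s^*_V$ by the same reasoning that gives $\gamma = 0$ in Section~\ref{upperbounds}. This covers all planar examples and in particular the sets $F_C^\lambda$ discussed immediately after the corollary.
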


We note that Corollary \ref{thmWSP2} implies that the set $F_C^\lambda$ satisfies (\ref{expected}) if $\lambda$ is such that the underlying IFS satisfies the weak separation property.  This happens if, for example, $\lambda$ is the reciprocal of a Pisot number; see the second example from Figure \ref{examples}.

In light of Theorem \ref{thmWSP}, in order to find examples where (\ref{expected}) fails for $F_C \subseteq \mathbb{R}^n$, one is forced to find examples where $\overline{\dim}_\text{B} F_\emptyset < \min\{s^*,n\}$.  This is linked to an important open conjecture in the study of homogeneous self-similar sets, see for example \cite{problems_update, hochman2}, which states that the only mechanism for the Hausdorff dimension of a self-similar set in $\mathbb{R}$ to be strictly less than $\min\{1, s\}$ is for the IFS to have exact overlaps.  Exact overlaps are precisely what causes $s^* < s$ and so it may be true that the Hausdorff dimension of such a self-similar set is always equal to $\min\{1, s^*\}$.

\begin{conj}
If $F_C \subset \mathbb{R}$ is an inhomogeneous self-similar, then (\ref{expected}) is satisfied, i.e. $\overline{\dim}_\text{\emph{B}} F_C \ = \  \max\{ \overline{\dim}_\text{\emph{B}} F_\emptyset , \, \overline{\dim}_\text{\emph{B}} C \}$.
\end{conj}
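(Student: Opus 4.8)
The plan is to combine Theorem~\ref{thmWSP} with the equality of box and Hausdorff dimension for homogeneous self-similar sets, thereby reducing the conjecture to a purely homogeneous statement about self-similar subsets of $\mathbb{R}$. Writing $\beta = \overline{\dim}_\text{B} C$, Theorem~\ref{thmWSP} already supplies
\[
\max\{\overline{\dim}_\text{B} F_\emptyset,\ \beta\} \ \leq \ \overline{\dim}_\text{B} F_C \ \leq \ \max\{s^*,\ \beta\},
\]
so the lower bound is exactly the quantity we want, and only the upper bound needs improvement. Since $F_C \subset \mathbb{R}$ we also have the trivial bound $\overline{\dim}_\text{B} F_C \leq 1$, and since $C\subset\mathbb{R}$ is compact we have $\beta\leq 1$. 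Combining the two upper bounds and using that $\min$ distributes over $\max$ on the reals gives
\[
\overline{\dim}_\text{B} F_C \ \leq \ \min\big\{1,\ \max\{s^*,\beta\}\big\} \ = \ \max\big\{\min\{1,s^*\},\ \min\{1,\beta\}\big\} \ = \ \max\big\{\min\{1,s^*\},\ \beta\big\}.
\]

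Thus the conjecture follows the moment one knows the single homogeneous equality $\overline{\dim}_\text{B} F_\emptyset = \min\{1,\ s^*\}$: substituting it into the display yields $\overline{\dim}_\text{B} F_C \leq \max\{\overline{\dim}_\text{B} F_\emptyset,\ \beta\}$, matching the lower bound. Here the inequality $\overline{\dim}_\text{B} F_\emptyset \leq \min\{1,s^*\}$ is free, as $s^*$ is an upper bound for $\overline{\dim}_\text{B} F_\emptyset$ by construction (the reduced IFS $\mathcal{I}(r)/\sim$ has $F_\emptyset$ as its attractor for every $r$) and $F_\emptyset\subset\mathbb{R}$; so the entire content is the reverse inequality. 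Recalling that box and Hausdorff dimension coincide for a homogeneous self-similar set (\cite[Corollary 3.3]{techniques}), the needed input is precisely the lower bound $\dim_\text{H} F_\emptyset \geq \min\{1,s^*\}$.

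This remaining inequality is exactly the strengthened \emph{exact overlaps} statement flagged in the discussion preceding the conjecture: the belief that the only mechanism forcing $\dim_\text{H} F_\emptyset < \min\{1,s\}$ is the presence of exact overlaps, which are precisely what make $s^* < s$. Under the weak separation property this lower bound is Zerner's theorem, so the conjecture is then already contained in Corollary~\ref{thmWSP2}; the genuinely open regime is therefore exactly where the weak separation property fails. The main obstacle is that this homogeneous equality is itself a deep open problem tied to Hochman's work \cite{hochman, hochman2}: one would need to upgrade his dichotomy (either $\dim_\text{H} = \min\{1,s\}$ or the cylinders concentrate super-exponentially) to the clean conclusion $\dim_\text{H} F_\emptyset = \min\{1,s^*\}$, controlling the dimension loss caused by near-exact but not exact overlaps at all scales simultaneously. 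No new idea on the inhomogeneous side is required beyond Theorem~\ref{thmWSP}; the difficulty is inherited wholesale from the homogeneous theory.
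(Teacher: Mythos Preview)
The statement is a \emph{conjecture}, not a theorem: the paper does not prove it and offers no proof to compare against. Your write-up correctly recognises this, and your reduction is exactly the reasoning the paper uses to motivate the conjecture in the first place. Theorem~\ref{thmWSP} plus the ambient bound $\overline{\dim}_\text{B} F_C\le 1$ reduce the inhomogeneous statement to the homogeneous equality $\dim_\text{H} F_\emptyset=\min\{1,s^*\}$, which the paper explicitly flags as the open ``exact overlaps'' problem (see the paragraph immediately preceding the conjecture). So you have not proved the conjecture, but you have accurately reproduced the paper's own argument that it is equivalent to that well-known homogeneous question; nothing further is claimed or available.
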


Finally we point out that one can deduce several corollaries from the work of Hochman \cite{hochman, hochman2} of the form: ``for some continuously parameterised family of IFSs of similarities, (\ref{expected}) holds almost surely for the corresponding inhomogeneous self-similar set $F_C$ for any compact $C$''.  This is because Hochman gives several such results guaranteeing $\dim_\H F_\emptyset = \min\{s,n\}$ almost surely with respect to the specific parameterisation.  In fact, the results often hold outside exceptional sets of dimension strictly less than the dimension of the parameter space.  Rather than state these explicitly we refer the reader to \cite[Theorem 1.12, 1.13]{hochman2}.

\end{document}